\newtheorem{theorem}{Theorem}[section]
\newtheorem{lemma}[theorem]{Lemma}
\newtheorem{proposition}[theorem]{Proposition}
\newtheorem{corollary}[theorem]{Corollary}
\theoremstyle{definition}
\newtheorem{definition}[theorem]{Definition}
\theoremstyle{remark}
\def\N{\mathbb{N}}
\def\Z{\mathbb{Z}}
\def\hN{{{}^*\N}}
\def\hhN{{}^{**}\N}
\def\hA{{}^*A}
\def\hX{{}^*X}
\def\hf{{}^*f}
\def\F{\mathcal{F}}
\def\U{\mathcal{U}}
\def\V{\mathcal{V}}
\def\W{\mathcal{W}}
\def\UU{\mathfrak{U}}
\def\ueq{{\,{\sim}_{{}_{\!\!\!\!\! u}}\;}}
\def\nueq{{\,{\not\sim}_{{}_{\!\!\!\! u}}\,}}
\def\ueqs{{\,{\approx}_{{}_{\!\!\!\!\! u}}\;}}
\begin{document}

\title[Iterated hyper-extensions]
{Iterated hyper-extensions and an idempotent ultrafilter proof
of Rado's theorem}

\author{Mauro Di Nasso}
\address{Dipartimento di Matematica,
Universit\`{a} di Pisa, Italy.} \email{dinasso@dm.unipi.it}

\subjclass[2000]
{03H05; 03E05, 05D10, 11D04.}

\keywords{Nonstandard analysis, Ultrafilters, Ramsey theory, Diophantine equations}


\maketitle

\begin{abstract}
By using nonstandard analysis, and in particular iterated
hyper-extensions, we give foundations to a
peculiar way of manipulating ultrafilters on the natural numbers
and their pseudo-sums. The resulting formalism is
suitable for applications in Ramsey theory of numbers.
To illustrate the use of our technique,
we give a (rather) short proof of Milliken-Taylor's Theorem,
and a ultrafilter version of Rado's theorem about partition
regularity of diophantine equations.
\end{abstract}

\bigskip
\section*{Introduction}

The algebraic structure
on the space of ultrafilters
$\beta\N$ as given by the pseudo-sum operation
$\U\oplus\V$, and the related generalizations,
have been deeply investigated during the
last thirty years, revealing a powerful tool for applications
in Ramsey theory and combinatorial number theory
(see the monograph \cite{hs}).
The aim of this paper is to introduce a peculiar
formalism grounded on the use of the
hyper-natural numbers of nonstandard analysis,
that allows to manipulate ultrafilters on $\N$
and their pseudo-sums in a simplified manner.
Especially, we shall be interested in
linear combinations $a_0\U\oplus\ldots\oplus a_k\U$
of a given idempotent ultrafilter $\U$.
To illustrate the use of our technique,
we shall give a nonstandard proof of Ramsey Theorem,
and a (rather) short proof of Milliken-Taylor's
Theorem, a strengthening of the celebrated Hindman's Theorem.
Moreover, we shall also prove the following
ultrafilter version of Rado's Theorem, that seems
to be new.

\medskip
\noindent
\textbf{Theorem.}
{\emph{
Let $c_1 X_1+\ldots + c_k X_k=0$ be a diophantine
equation with $c_1+\ldots+c_k=0$ and $k>2$.
Then there exists $a_0,\ldots,a_{k-2}\in\N$
such that for every idempotent ultrafilter $\U$,
the corresponding linear combination
$$\V=a_0\U\oplus \ldots\oplus a_{k-2}\U$$
witnesses the injective partition regularity of the
given equation, \emph{i.e.} for every $A\in\V$ there exist
distinct elements $x_1,\ldots,x_k\in A$
with $c_1 x_1+\ldots+c_k x_k=0$.}

\medskip
At the end of the paper, some hints are given
for further possible applications and developments
of the introduced nonstandard technique.

\smallskip
We assume the reader to be familiar with the notion
of \emph{ultrafilter},
and with the basics of \emph{nonstandard analysis}.
In particular, we shall call \emph{star map} or
\emph{nonstandard embedding} a function $A\mapsto\hA$
that associates to each mathematical
object $A$ under consideration its \emph{hyper-extension}
$\hA$, and that satisfies the \emph{transfer principle}.
Excellent references for the foundations
of nonstandard analysis are \cite{ck} \S 4.4,
where the classical superstructure approach is presented,
and the textbook \cite{go}, grounded on the ultrapower
construction.
The peculiarity of our nonstandard approach is
that we shall use iterated hyper-extensions
(see the discussion in Section \ref{sec-iterated}).



\bigskip
\section{$u$-equivalent polynomials}\label{sec-uequivalence}

Before starting to work in a nonstandard setting,
in this section we present a general result about
linear combinations of a given idempotent ultrafilter
(see below for the definition),
whose proof will be given in Section \ref{rado}.
As a consequence of its, we prove an
ultrafilter version of Rado's theorem.

\smallskip
Throughout the paper, $\N$ will denote
the set of \emph{positive} integers,
and $\N_0=\N\cup\{0\}$ the set of \emph{non-negative} integers.

\smallskip
Recall the \emph{pseudo-sum} operation between ultrafilters on $\N_0$:
$$A\in\U\oplus\V\ \Longleftrightarrow\ \{n\in\N_0\mid A-n\in\V\}\in\U,$$
where $A-n=\{m\in\N_0\mid m+n\in A\}$ is the
left-ward \emph{shift} of $A$ by $n$. It can be readily verified
that $\U\oplus\V$ is actually an ultrafilter,
and that the pseudo-sum operation is \emph{associative}.
If one identifies every \emph{principal ultrafilter}
$\UU_n=\{A\subseteq\N_0\mid n\in A\}$ with its
generator $n\in\N_0$, then it is
readily seen that the pseudo-sum extends the usual
addition, \emph{i.e.} $\UU_n\oplus\UU_m=\UU_{n+m}$;
moreover, $\U\oplus\UU_0=\UU_0\oplus\U=\U$ for all $\U$.
(In fact, it can be proved that the center of $(\beta\N_0,\oplus)$
is the family $\{\UU_n\mid n\in\N_0\}$ of the principal
ultrafilters. A nonstandard proof of this fact can be found in
\cite{dn5}.)

\smallskip
Given an ultrafilter $\U$ on $\N$ and a natural number $h$,
the product $h\,\U$ is the ultrafilter defined by putting
$$A\in h\,\U\ \Longleftrightarrow\ A/h=\{n\mid n\,h\in A\}\in\U.$$
Notice that $0\,\U=\UU_0$ and $1\,\U=\U$ for every $\U$.

\smallskip
Particularly relevant for applications are the \emph{idempotent ultrafilters},
namely the \emph{non-principal} ultrafilters $\U$ such that $\U\oplus\U=\U$.
We remark that their existence is a non-trivial result whose proof
requires repeated applications of Zorn's lemma.
(To be precise, also the principal ultrafilter $\UU_0$ has
the property $\UU_0=\UU_0\oplus\UU_0$, but it is not
usually considered as an ``idempotent" in the literature.)

\smallskip
Let us now introduce an equivalence relation on
the \emph{strings} (\emph{i.e.}, finite sequences) of integers.

\medskip
\begin{definition}\label{ueqs}
The $u$-\emph{equivalence} $\ueqs$ between
strings of integers is the smallest
equivalence relation such that:

\smallskip
\begin{itemize}
\item
The empty string $\varepsilon\ueqs\langle 0\rangle$.

\smallskip
\item
$\langle a\rangle\ueqs\langle a,a\rangle$ for all $a\in\Z$.

\smallskip
\item
$\ueqs$ is coherent with \emph{concatenations}\footnote
{~Recall that if $\sigma=\langle a_0,\ldots,a_k\rangle$ and
$\tau=\langle b_0,\ldots,b_h\rangle$,
then their concatenation is defined as
$\sigma^{\frown}\tau=\langle a_0,\ldots,a_k,b_0,\ldots,b_h\rangle$.},
\emph{i.e.}
$$\sigma\ueqs\sigma'\ \text{and }\tau\ueqs\tau'\ \Longrightarrow\
\sigma^{\frown}\tau\,\ueqs\,\sigma'^\frown\tau'.$$
\end{itemize}

\smallskip
Two polynomials $P(X)=\sum_{i=0}^n a_i X^i$ and
$Q(X)=\sum_{j=0}^m b_j X^j$ in $\Z[X]$ are $u$-\emph{equivalent}
when the corresponding strings of coefficients are $u$-equivalent:
$$\langle a_0,\ldots,a_n\rangle\,\ueqs\,
\langle b_0,\ldots,b_m\rangle$$
\end{definition}

\medskip
So, $\ueqs$-equivalence between strings
is preserved by inserting or removing zeros,
by repeating finitely many times a term or,
conversely, by shortening a block of consecutive equal
terms. \emph{E.g.} $\langle 3,0,0,-4,1,1\rangle\,\ueqs\,
\langle 0,3,-4,-4,1\rangle$ and
$\langle 2,2,0,0,7,7,3\rangle\,\ueqs\,
\langle 2,7,3\rangle$, and hence:

\smallskip
\begin{itemize}
\item
$X^5+X^4-4X^3+3\ \ueqs\, X^4-4X^3-4X^2+3X$

\smallskip
\item
$3X^6+7X^5+7X^4+2X+2\ \ueqs\, 3X^2+7X+2$, \emph{etc}.
\end{itemize}

\medskip
As an application of nonstandard analysis, in
Section \ref{rado} the following will be proved:

\medskip
\noindent
\textbf{Theorem \ref{linear-pr}}
\emph{Let $a_0,\ldots,a_n\in\N_0$, and assume that
there exist [distinct] polynomials $P_i(X)$ such that
$$P_1(X)\,\ueqs\, \ldots\,\ueqs\, P_k(X)\,\ueqs\, \sum_{i=0}^n a_iX^i\ \ \text{and}\ \
c_1 P_1(X)+\ldots+c_k P_k(X)=0.$$
Then for every idempotent ultrafilter $\U$ and for every
$A\in a_0\U\oplus\ldots\oplus a_n\U$,
there exist [distinct]
$x_i\in A$ such that $c_1 x_1+\ldots+c_k x_k=0$.}

\medskip
We derive here a straight consequence of the above theorem
which is a ultrafilter version of Rado's theorem.

\medskip
\begin{theorem}\label{th-rado}
Let $c_1 X_1+\ldots + c_k X_k=0$ be a diophantine
equation with $c_1+\ldots+c_k=0$ and $k>2$.
Then there exists $a_0,\ldots,a_{k-2}\in\N$
such that for every idempotent ultrafilter $\U$,
the corresponding linear combination
$$\V=a_0\U\oplus \ldots\oplus a_{k-2}\U$$
witnesses the injective partition regularity of the
given equation, \emph{i.e.}, for every $A\in\V$ there exist distinct
$x_i\in A$ such that $c_1 x_1+\ldots+c_k x_k=0$.
\end{theorem}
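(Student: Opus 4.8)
The plan is to derive Theorem~\ref{th-rado} from Theorem~\ref{linear-pr} by choosing the coefficients $a_0,\ldots,a_{k-2}$ so that the polynomial $\sum_{i=0}^{k-2}a_iX^i$ admits $k$ \emph{distinct} $u$-equivalent representatives $P_1,\ldots,P_k$ whose appropriate $c_i$-combination vanishes. First I would normalize: since $c_1+\ldots+c_k=0$ and $k>2$, after reindexing we may assume $c_k\neq 0$ and write $c_k=-(c_1+\ldots+c_{k-1})$. The natural candidate is to take $a_i$ to be the value $d^i$ for a well-chosen $d\in\N$, or more flexibly to exhibit the target string $\langle a_0,\ldots,a_{k-2}\rangle$ directly as a common $u$-reduct of $k$ strings. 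Concretely, consider for each $j\in\{1,\ldots,k-1\}$ the string $\sigma_j$ obtained by repeating the $j$-th coordinate of a base string, and let $\sigma_k$ be the base string itself; the three defining clauses of $\ueqs$ (insert/delete zeros, repeat/contract a term) then let me massage all $\sigma_j$ to a common canonical form. The coefficients $a_i$ are read off from that canonical form.

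The key step is the algebraic identity $c_1P_1(X)+\ldots+c_kP_k(X)=0$. Here I would use the ``repeat a coordinate'' move as the engine: if $P(X)=\sum a_iX^i$ and we single out a coordinate $a_m$, then the polynomial $P^{(m)}(X)$ obtained by duplicating $a_m$ (so its coefficient string is $\langle a_0,\ldots,a_m,a_m,\ldots,a_{k-2}\rangle$) satisfies $P^{(m)}(X) = P(X) + a_m X^{m+1}\cdot(\text{shift correction})$ --- more precisely, duplicating the entry in position $m$ changes $P(X)=A(X)+X^{m}\,a_m+X^{m+1}B(X)$ into $A(X)+X^{m}a_m+X^{m+1}a_m+X^{m+2}B(X)$, i.e. it adds $a_mX^{m+1}$ and multiplies the tail $X^{m+1}B(X)$ by $X$. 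I want to pick a single common target string so that each $P_j$ is obtained from it by one such elementary contraction/expansion, arranged so that the ``error terms'' telescope against each other when weighted by the $c_j$. Since $\sum c_j = 0$, a uniform shift of all $P_j$ by the same string contributes $\left(\sum c_j\right)\times(\text{that block})=0$, which is exactly the slack that makes the construction possible; this is why the hypothesis $c_1+\ldots+c_k=0$ is indispensable and why $k-1$ (rather than $k$) parameters suffice --- one degree of freedom is eaten by the sum-zero relation.

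I expect the main obstacle to be the \emph{distinctness} of the $P_j$ together with simultaneously forcing the weighted sum to vanish: it is easy to get $k$ $u$-equivalent polynomials, and easy to get a vanishing combination with repetitions allowed, but pinning down an explicit family that does both requires a careful bookkeeping of which coordinate is duplicated in which $P_j$ and in which position, so that no two coefficient strings coincide. I would handle this by working with a base string whose entries are, say, $\langle 1, 2, 4, \ldots, 2^{k-2}\rangle$ or otherwise in ``general position'' (all partial sums distinct), duplicating the $j$-th entry in $P_j$; the $2$-adic (or generic) independence guarantees the strings are pairwise distinct, and the linear relation is then solved for the $c_j$ by linear algebra over $\Q$, clearing denominators to return to $\Z$. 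Once the $a_i\in\N$ and distinct $P_i$ with $c_1P_1+\ldots+c_kP_k=0$ are in hand, Theorem~\ref{linear-pr} applies verbatim: for every idempotent $\U$ and every $A\in a_0\U\oplus\ldots\oplus a_{k-2}\U = \V$ there are distinct $x_i\in A$ with $c_1x_1+\ldots+c_kx_k=0$, which is the assertion.
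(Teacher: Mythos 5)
Your overall strategy coincides with the paper's: produce $k$ distinct polynomials $P_1,\ldots,P_k$, all $u$-equivalent to $\sum_{i=0}^{k-2}a_iX^i$, with $c_1P_1(X)+\ldots+c_kP_k(X)=0$, and then invoke Theorem~\ref{linear-pr}. You also correctly identify why $c_1+\ldots+c_k=0$ is the essential slack. But the concrete construction you sketch has a genuine gap, in fact two. First, you have the unknowns backwards: at the end you propose to fix the base string in ``general position'' (e.g.\ $\langle 1,2,4,\ldots,2^{k-2}\rangle$) and then ``solve for the $c_j$ by linear algebra'' --- but the $c_j$ are the given coefficients of the diophantine equation and cannot be chosen; it is the $a_i$ that must be solved for, as functions of the $c_j$. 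With a fixed base string there is no freedom left, and the vanishing condition then fails generically: if $\sigma_k$ is the base string itself (degree $k-2$) and $\sigma_1,\ldots,\sigma_{k-1}$ are its single-coordinate duplications (degree $k-1$), the coefficient of $X^{k-1}$ in $\sum_j c_jP_j$ is $(c_1+\ldots+c_{k-1})\,a_{k-2}=-c_k\,a_{k-2}$, which is nonzero whenever $c_k\neq 0$. Second, a family built only from duplications is too rigid. The paper's family mixes exactly two duplications (of $a_{k-2}$ in $P_1$ and of $a_0$ in $P_k$) with $k-2$ zero-insertions at the interior positions, all padded to the common degree $k-1$; consecutive differences $P_j-P_{j+1}$ are then single monomials or differences of two, and the condition $\sum_j c_jP_j=0$ collapses to a triangular system of $k$ linear equations in $a_0,\ldots,a_{k-2}$ whose first and last equations are killed by $\sum_j c_j=0$.

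There is also a third point you do not address: the theorem requires $a_i\in\N$, i.e.\ strictly positive, and positivity is precisely what makes the $P_i$ pairwise distinct. The triangular system only determines the $a_i$ up to scaling, and to obtain all $a_i>0$ the paper first reorders the equation so that $c_1\ge\ldots\ge c_k$ and then exhibits the $a_i$ explicitly as (signed) products of partial sums of the $c_j$'s. Without some such normalization, ``clearing denominators'' could leave you with $a_i$ of mixed sign or equal to zero, breaking both the membership $a_i\in\N$ and the distinctness of the $P_i$. So the reduction to Theorem~\ref{linear-pr} is right, but the heart of the proof --- the explicit choice of the $P_i$ and of the $a_i$ in terms of the given $c_j$ --- is missing and cannot be completed along the specific lines you propose.
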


\begin{proof}
For arbitrary $a_0,\ldots,a_{k-2}$, consider the following polynomials:
$$\!\!\!\!\!\!
\tiny{
\begin{array}{lllllllllllllll}
P_1(X) &=& a_0 &+& a_1 X &+& a_2 X^2 &+& \ldots &+& a_{k-3} X^{k-3} &+& a_{k-2}X^{k-2} &+& a_{k-2}X^{k-1} \\
P_2(X) &=& a_0 &+& a_1 X &+& a_2 X^2 &+& \ldots &+& a_{k-3}X^{k-3} &+& 0 &+& a_{k-2}X^{k-1} \\
P_3(X) &=& a_0 &+& a_1 X &+& a_2 X^2 &+& \ldots &+& 0 &+& a_{k-3}X^{k-2} &+& a_{k-2}X^{k-1} \\
\vdots &{}& \vdots &{}& \vdots &{}& \vdots &{}& \vdots &{}& \vdots &{}& \vdots &{}& \vdots \\
P_{k-2}(X) &=& a_0 &+& a_1 X &+& 0 &+& a_2 X^3 &+& \ldots &+& a_{k-3}X^{k-2} &+& a_{k-2}X^{k-1} \\
P_{k-1}(X) &=& a_0 &+& 0 &+& a_1 X^2 &+& a_2 X^3 &+& \ldots &+& a_{k-3}X^{k-2} &+& a_{k-2}X^{k-1} \\
P_k(X) &=& a_0 &+& a_0 X &+& a_1 X^2 &+& a_2 X^3 &+& \ldots &+& a_{k-3}X^{k-2} &+& a_{k-2} X^{k-1}
\end{array}}
$$

Notice that $P_1(X)\,\ueqs\,\ldots\,\ueqs\,P_k(X)\,\ueqs\,\sum_{i=0}^{k-2}a_i X^i$.
In order to apply Theorem \ref{linear-pr}, we need to find
suitable coefficients $a_0,\ldots,a_{k-2}$
in such a way that the linear combination
$c_1 P_1(X)+\ldots+c_k P_k(X)=0$.
It is readily verified that this happens
if and only if the following conditions are fulfilled:

$$\begin{cases}
(c_1+\ldots+c_k)\cdot a_0 = 0 \\
(c_1+\ldots+c_{k-2})\cdot a_1 + c_k\cdot a_0 = 0 \\
(c_1+\ldots+c_{k-3})\cdot a_2+ (c_{k-1}+c_k)\cdot a_1 = 0 \\
{}\quad\quad\quad \vdots \\
c_1\cdot a_{k-2}+(c_3+\ldots+c_k)\cdot a_{k-3} = 0 \\
(c_1+\ldots+c_k)\cdot a_{k-2} = 0
\end{cases}$$

\smallskip
The first and the last equations are trivially satisfied because of the
hypothesis $c_1+\ldots+c_k=0$. Now assume without loss
of generality that the coefficients
$c_1\ge \ldots\ge c_k$ are arranged in non-increasing order.
It can be verified in a straightforward manner that
the remaining $k-2$ equations are satisfied by (infinitely many)
suitable $a_0,\ldots,a_{k-2}\in\N$, \emph{e.g.}
$$(*)\quad
\begin{cases}
a_0\ =\ c_1\cdot(c_1+c_2)\cdot\ldots\cdot(c_1+\ldots+c_{k-2}) \\
a_i\ =\ b_i\cdot b'_i\ \text{ for } 0<i<k-2\ \ \text{where }\\
\quad\quad b_i=c_1\cdot(c_1+c_2)\cdot\ldots\cdot(c_1+\ldots+c_{k-2-i})\ \text{and} \\
\quad\quad b'_i=(-1)^i\cdot c_k\cdot(c_k+c_{k-1})\cdot\ldots\cdot(c_k+\ldots+c_{k+1-i}) \\
a_{k-2}\ =\ (-1)^{k-2}\cdot c_k\cdot(c_k+c_{k-1})\cdot\ldots\cdot(c_k+\ldots+c_3)
\end{cases}$$

Remark that by the hypothesis $c_1+\ldots+c_k=0$ together
with the assumption $c_1\ge\ldots\ge c_k$, it follows
that all $a_i>0$.
Finally, remark also that since all coefficients $a_0,\ldots,a_{k-2}\ne 0$,
the polynomials $P_i(X)$ are mutually distinct.
\end{proof}

\medskip
For instance, consider the diophantine equation
$$3X_1+X_2+X_3-X_4-4X_5\ =\ 0$$
where coefficients are arranged in non-increasing order and their
sum equals zero.
By using $(*)$ in the above proof, we obtain that
$a_0=60$, $a_1=48$, $a_2=60$, $a_3=80$.
So, for every idempotent ultrafilter $\U$
and for every set
$$A\in 60\,\U\oplus 48\,\U\oplus 60\,\U\oplus 80\,\U$$
we know that there exist distinct
$x_i\in A$
such that
$3x_1+x_2+x_3-x_4-4 x_5=0$.

\bigskip
\section{Iterated hyper-extensions}\label{sec-iterated}

For our purposes, we shall need to work in models of nonstandard analysis
where hyper-extensions can be iterated, so that
one can consider, \emph{e.g.}, the set of hyper-hyper-natural numbers ${}^{**}\N$,
the hyper-hyper-hyper-natural numbers ${}^{***}\N$, the
hyper-extension ${}^*\nu$ of an hyper-natural number $\nu$,
and so forth.
Moreover, we shall use the $\mathfrak{c}^+$-\emph{enlargement property},
namely the property that intersections
$\bigcap_{F\in\F}{}^*F$ are nonempty for all
families $|\F|\le\mathfrak{c}$ of cardinality at most the
continuum, and which satisfy the \emph{finite intersection property}
(\emph{i.e.} $A_1\cap\ldots\cap A_n\ne\emptyset$ for every choice
of finitely many $A_i\in\F$).

\smallskip
In full generality, all the above requirements are fulfilled
by taking a $\mathfrak{c}^+$-enlarging nonstandard embedding
$$*:\mathbf{V}\,\longrightarrow\,\mathbf{V}$$
which is defined on the \emph{universal class} $\mathbf{V}$ of all sets.
It is now a well-known fact in nonstandard set theory that
such ``universal" nonstandard embeddings can be constructed
within conservative extensions of ZFC where the regularity axiom
is replaced by a suitable \emph{anti-foundation axiom};
see \emph{e.g.} \cite{bhr}.\footnote{
~In summary, one takes \emph{transitive Mostowski collapses} of
\emph{ultrapowers} $\mathbf{V}^I/\U$ of the universe.
For any given cardinal $\kappa$, the $\kappa$-enlargement property
is obtained by picking
a $\kappa$-\emph{regular} ultrafilter $\U$.}
(For a comprehensive treatment of nonstandard set theories,
we refer the interested reader to the monography \cite{kr}.)

\smallskip
A suitable axiomatic framework is the nonstandard
set theory $\text{ZFC}[\Omega]$ of \cite{dn4},
which includes all axioms of Zermelo-Fraenkel theory ZFC
with choice with the only exception
of the regularity axiom, and where for every ``$\in$-definable"
cardinal $\kappa$, one has a nonstandard embedding
$J_\kappa:\mathbf{V}\to\mathbf{V}$
of the universe into itself that satisfies the
$\kappa$-enlargement property.\footnote
{~Indeed, the separation and replacement
schemas hold for all $\in$-$*$-formulas,
and $J_\kappa$ is postulated to satisfy
$\kappa$-\emph{saturation},
a stronger property than $\kappa$-enlargement.}
The resulting theory is conservative over ZFC.
(See also the related axiomatics ${}^*\text{ZFC}$ \cite{dn1,dn2}
and Alpha-Theory \cite{dn3}).

\smallskip
Another suitable setting where iterated hyper-extensions
can be considered was introduced by V. Benci in \cite{be}:
it consists in a special version of the \emph{superstructure approach}
$*:V(X)\to V(X)$ where the standard universe and the
nonstandard universe coincide. The limitation here is that
superstructures $V(X)$ only satisfies a fragment of ZFC
(\emph{e.g.}, replacement fails and there are no infinite
ordinals in $V(X)$).

\smallskip
We stress that working with iterated hyper-extensions requires
caution. To begin with, recall that in nonstandard analysis one has
that ${}^*n=n$ for all natural numbers $n\in\N$;
however, the same property cannot be extended to the
hyper-naturals numbers $\xi\in\hN$. Indeed, by \emph{transfer}
one can easily show that ${}^*\xi>\xi$ for all infinite
$\xi\in\hN$; more generally, the following facts hold:

\smallskip
\begin{itemize}
\item
$\hN\varsubsetneq\hhN$.

\smallskip
\item
If $\xi\in\hN\setminus\N$ then ${}^*\xi\in\hhN\setminus\hN$.

\smallskip
\item
$\hN$ is an initial segment of $\hhN$, \emph{i.e.}
$\xi<\nu$ for every $\xi\in\hN$ and for every
$\nu\in\hhN\setminus\hN$.
\end{itemize}

\medskip
Let $f:\N_0\to\N_0$. By transferring the fact that $\hf$ and $f$
agree on $\N_0$, one gets that

\smallskip
\begin{itemize}
\item
$({}^*\hf)(\xi)=\hf(\xi)$ for all $\xi\in\hN_0$.
\end{itemize}

\smallskip
Remark that ${}^*[\hf(\xi)]=({}^*\hf)({}^*\xi)$, but
in general $({}^*\hf)({}^*\xi)\ne ({}^*\hf)(\xi)$.

\smallskip
Now
denote by ``$k*$" the $k$-times iterated star map, \emph{i.e.}
$$\begin{cases}
{}^{0*}A\,=\,A
\\
{}^{(k+1)*}A\,=\,{}^*\!\left({}^{k*}A\right).
\end{cases}$$

\smallskip
As a first application of iterated hyper-extensions,
we now present a nonstandard proof of Ramsey theorem.

\medskip
\begin{lemma}\label{lemma1}
Let $A\subseteq\N^k$.
If there exists an infinite $\xi\in\hN$
such that $(\xi,{}^*\xi,\ldots,{}^{(k-1)*}\xi)\in{}^{k*}A$
then there exists an infinite set of natural numbers
$$H\ =\ \{h_1<h_2<\ldots<h_n<h_{n+1}<\ldots\ \}$$
such that $(h_{n_1},h_{n_2},\ldots,h_{n_k})\in A$
for all $n_1<n_2<\ldots<n_k$.
\end{lemma}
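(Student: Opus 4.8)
The plan is to extract the Ramsey-homogeneous set $H$ by building it one element at a time, using the hypothesis together with the fact that $\hN$ is an initial segment of $\hhN$ and, more generally, that each ${}^{j*}\N$ is an initial segment of ${}^{(j+1)*}\N$. First I would interpret the hypothesis: the tuple $(\xi,{}^*\xi,\ldots,{}^{(k-1)*}\xi)\in{}^{k*}A$ says, after applying transfer once, that $({}^*\xi,\ldots,{}^{(k-1)*}\xi)$ lies in ${}^{(k-1)*}(A_\xi)$ where $A_\xi=\{(y_2,\ldots,y_k):(\xi,y_2,\ldots,y_k)\in{}^*A\}$ — but one must be careful since $\xi$ is not standard, so this set ${}^*A_{\xi}$ only makes sense after interpreting the first coordinate inside the nonstandard universe. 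The cleaner way is to stay one level down: the statement $(\xi,{}^*\xi,\ldots,{}^{(k-1)*}\xi)\in{}^{k*}A$, which lives in ${}^{k*}\N$, is (by transfer applied $k$ times, reading it backwards) equivalent to: for all sufficiently large hyper-naturals — precisely, it transfers to a statement about finite tuples.

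The concrete approach: I would prove by a downward recursion that there exist infinite hyper-natural numbers $\eta_{k}>\eta_{k-1}>\cdots>\eta_1$ realizing the configuration, and then peel off standard witnesses using the initial-segment property. More usefully, here is the step-by-step plan. Step 1: Show that the hypothesis is equivalent to the following ``one-level'' statement: for every $h\in\N$, the set $\{(\text{remaining }k-1\text{ coords})\}$ still contains $(\xi,{}^*\xi,\ldots,{}^{(k-2)*}\xi)$ — i.e., set up an induction on $k$. For $k=1$ the hypothesis $\xi\in{}^*A$ with $\xi$ infinite gives, by transfer, that $A$ is infinite (it is unbounded: $A\supseteq$ some element above every $n$), so $H=A$ works. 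Step 2 (inductive step): Given the hypothesis for $k$, I claim $B=\{h\in\N:(h,\ldots)\text{ can be continued}\}$ is infinite. Precisely, define $B=\{h\in\N:\exists\ \text{an infinite }\zeta\in\hN\text{ with }(h,\zeta,{}^*\zeta,\ldots,{}^{(k-2)*}\zeta)\in{}^{(k-1)*}A\}$. The key sub-claim is that $\xi$'s ``shadow'' forces every large enough standard $h$ into $B$: because $\hN$ is an initial segment of $\hhN$, and $\xi<{}^*\xi$, one can substitute a standard $h$ for the leading $\xi$ while pushing the tail up by one star.

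Step 3: This substitution is the heart of the matter and the main obstacle. The mechanism: by transfer, the statement ``$(\xi,{}^*\xi,\ldots,{}^{(k-1)*}\xi)\in{}^{k*}A$'' transfers \emph{downward} to ``there exist $\xi_1<\xi_2<\cdots<\xi_k$ in $\hN$ with $(\xi_1,\ldots,\xi_k)\in{}^*A$ and each $\xi_{j+1}$ infinite relative to $\xi_j$''; iterating transfer, one shows that for each fixed standard $n_1<\cdots<n_k$ the membership $(\xi,\ldots)\in{}^{k*}A$ descends to give, for cofinally many standard values, a genuine tuple in $A$. The precise form I would isolate is: the hypothesis implies $\{h\in\N:(h,{}^*\xi',\ldots,{}^{(k-1)*}\xi')\in{}^{k*}A\text{ for some infinite }\xi'\text{ with }h<\xi'\}$ is infinite, by applying transfer to the true statement in ${}^*\N$ that $\{y_1\in{}^*\N:\exists y_2<\cdots<y_k,\ (y_1,\ldots,y_k)\in{}^*A,\ y_2>\xi\}\ni$ all sufficiently large elements (which holds because $\xi$ itself is below ${}^*\xi<{}^{2*}\xi<\cdots$ and these witness it at the higher level, then transfer pulls it down). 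Step 4: Having shown $B$ infinite, enumerate it, pick $h_1=\min B$, observe that the ``continuation'' witnessed for $h_1$ is again of the form required by the lemma for $k-1$ on the set $A_{h_1}=\{(y_2,\ldots,y_k):(h_1,y_2,\ldots,y_k)\in A\}\subseteq\N^{k-1}$ — namely some infinite $\zeta$ with $(\zeta,{}^*\zeta,\ldots,{}^{(k-2)*}\zeta)\in{}^{(k-1)*}A_{h_1}$ — so by the inductive hypothesis $A_{h_1}$ has an infinite homogeneous set $H_1$; but one must intersect finitely many such conditions, so really I would carry the induction with the stronger statement that allows a fixed finite set of ``forbidden small values'' or simply rebuild $B$ inside $A_{h_1}$ at each stage and diagonalize. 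Step 5: The diagonalization — set $H=\{h_1<h_2<\cdots\}$ where $h_{n+1}$ is chosen in the analogue of $B$ computed relative to $A_{h_1,\ldots,h_n}$ and larger than $h_n$; the initial-segment property of the iterated $\N$'s guarantees at each stage that the relevant $B$-set is infinite, hence the choice is possible, and any increasing $k$-tuple $h_{n_1}<\cdots<h_{n_k}$ from $H$ lies in $A$ by construction. The obstacle, to repeat, is Step 3: making rigorous the ``replace the leading infinite element by an arbitrary large standard one, at the cost of one extra star on everything downstream'' move, which is exactly where iterated hyper-extensions and the $\hN\varsubsetneq\hhN$ facts listed in Section~\ref{sec-iterated} do the work.
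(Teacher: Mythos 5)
Your overall strategy --- build $H$ greedily, one element at a time, using the fact that replacing the leading infinite coordinate by a standard one costs ``one extra star on everything downstream'' --- is the same as the paper's. But the step you yourself flag as the heart of the matter (your Step 3) is not correctly carried out, and the mechanism you propose for it would fail. Conditions such as ``each $\xi_{j+1}$ infinite relative to $\xi_j$'' and ``$\exists$ an infinite $\zeta\in\hN$'' are external: they are not internal formulas, so transfer cannot be applied to them, and your set $B$ (defined with an external existential) cannot have its hyper-extension computed from its defining condition. Likewise, the claim that $\{y_1\in\hN\mid\exists y_2<\cdots<y_k\ (y_1,\ldots,y_k)\in\hA,\ y_2>\xi\}$ contains all sufficiently large elements is unjustified, and even if it held you could not ``pull it down'' by transfer, because $\xi$ occurs in it as a parameter with no standard counterpart. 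The paper's resolution needs none of this: it defines honest subsets of $\N$ by \emph{internal} conditions with the nonstandard parameters $\xi,{}^*\xi,\ldots$ --- e.g.\ $X=\{n\in\N\mid(n,\xi,\ldots,{}^{(k-2)*}\xi)\in{}^{(k-1)*}A\}$ and $X_n=\{n'\in\N\mid(n,n',\xi,\ldots,{}^{(k-3)*}\xi)\in{}^{(k-2)*}A\}$ --- and observes that taking the hyper-extension of such a set shifts every parameter up by one star. Consequently $\xi\in\hX$ is \emph{literally} the hypothesis, and one has the identities $n\in X\Leftrightarrow\xi\in\hX_n$, $n'\in X_n\Leftrightarrow\xi\in\hX_{nn'}$, and so on. Since any subset of $\N$ whose hyper-extension contains the infinite element $\xi$ is infinite, this is all that is needed to keep choosing.

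The second gap is in the bookkeeping of your Steps 4--5. Conditioning only on the full prefix $A_{h_1,\ldots,h_n}$ is not enough when $k\ge 3$: homogeneity requires that \emph{every} increasing $k$-subtuple of $H$ land in $A$, so at stage $n$ you must maintain one condition for each increasing subtuple of $(h_1,\ldots,h_n)$ of length $<k$ (the paper's $X$, $X_{h_i}$, $X_{h_ih_j},\ldots$) and pick $h_{n+1}$ in the intersection of all of them. That intersection is finite, and $\xi$ lies in its hyper-extension by the identities above, so it is infinite and the choice is always possible; this is the precise form of your ``diagonalize,'' and once Step 3 is repaired as above the rest of your outline goes through.
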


\begin{proof}
To simplify notation, let us only consider here the particular
case $k=3$. The general result is proved exactly in
the same fashion, only by using a heavier notation.
So, let us assume that $(\xi,{}^*\xi,{}^{**}\xi)\in{}^{***}A$.
For $n,n'\in\N$, let:

\smallskip
\begin{itemize}
\item
$X=\{n\in\N\mid (n,\xi,{}^*\xi)\in{}^{**}A\}$\,;

\smallskip
\item
$X_n=\{n'\in\N\mid(n,n',\xi)\in\hA\}$\,;

\smallskip
\item
$X_{nn'}=\{n''\in\N\mid (n,n',n'')\in A\}$.
\end{itemize}

\smallskip
The corresponding hyper-extensions are described as follows:

\smallskip
\begin{itemize}
\item
$\hX=\{\eta\in\hN\mid (\eta,{}^*\xi,{}^{**}\xi)\in{}^{***}A\}$\,;

\smallskip
\item
$\hX_n=\{\eta\in\hN\mid(n,\eta,{}^*\xi)\in{}^{**}A\}$\,;

\smallskip
\item
$\hX_{n,n'}=\{\eta\in\hN\mid(n,n',\eta)\in\hA\}$.
\end{itemize}

\smallskip
Notice that
$n\in X\Leftrightarrow\xi\in\hX_n$, and
$n'\in X_n\Leftrightarrow\xi\in\hX_{nn'}$.
By the hypothesis, we have that $\xi\in\hX$, so $X$ is an infinite set
and we can pick an element $h_1\in X$.
Now, $\xi\in\hX\cap\hX_{h_1}$ implies that
$X\cap X_{h_1}$ is infinite, and so we can pick
an element $h_2>h_1$ in that intersection.
But then $\xi\in\hX\cap\hX_{h_1}\cap\hX_{h_2}\cap\hX_{h_1h_2}$,
and so we can pick an element $h_3>h_2$ in the intersection
$X\cap X_{h_1}\cap X_{h_2}\cap X_{h_1h_2}$.
In particular, $(h_1,h_2,h_3)\in A$.
An increasing sequence $\langle h_n\mid n\in\N\rangle$
that satisfies the desired property is obtained by iterating
this procedure, where at each step $n$ one has
$$\xi\in\hX\,\cap\,\bigcap_{1\le i\le n}\!\!\hX_{h_i}\,\,\cap\,
\bigcap_{1\le i<j\le n}\!\!\hX_{h_ih_j},$$
and $h_{n+1}>h_n$ is picked in the infinite intersection
$$h_{n+1}\in X\,\cap\,\bigcap_{1\le i\le n}\!\!X_{h_i}\,\,\cap\,
\bigcap_{1\le i<j\le n}\!\!X_{h_ih_j}.$$
\end{proof}

\medskip
As a straight corollary, one obtains:

\medskip
\begin{theorem}[Ramsey]
Let $[\N]^k=C_1\sqcup\ldots\sqcup C_r$
be a finite partition of the $k$-sets of natural numbers.\footnote
{~A $k$-\emph{set} is a set with exactly
$k$-many elements.}
Then there exists an infinite
$H\subseteq\N$ such that all its $k$-sets are monochromatic,
\emph{i.e.} $[H]^k\subseteq C_i$ for some $i$.
\end{theorem}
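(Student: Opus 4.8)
The plan is to derive Ramsey's theorem as an immediate corollary of Lemma \ref{lemma1}, so the only real work is translating the partition hypothesis into the hypothesis of that lemma. First I would fix an infinite hyper-natural number $\xi\in\hN\setminus\N$ (such $\xi$ exists since $\hN\ne\N$) and consider the tuple $(\xi,{}^*\xi,\ldots,{}^{(k-1)*}\xi)\in{}^{k*}(\N^k)$. The disjoint union $[\N]^k=C_1\sqcup\ldots\sqcup C_r$ gives, after fixing once and for all a linear order on each $k$-set (say writing its elements in increasing order), a partition of the set $A^{<}=\{(n_1,\ldots,n_k)\in\N^k\mid n_1<\ldots<n_k\}$ into pieces $A_i^{<}=\{(n_1,\ldots,n_k)\mid n_1<\ldots<n_k,\ \{n_1,\ldots,n_k\}\in C_i\}$; extend this to a partition $\N^k=B_1\sqcup\ldots\sqcup B_r\sqcup D$ where $D$ collects the tuples with a repeated entry and, say, the non-increasingly-ordered tuples go into $B_1$ (the exact bookkeeping on non-ordered tuples is irrelevant).

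The key step is to observe that the iterated hyper-extension of a finite disjoint union is the disjoint union of the iterated hyper-extensions: by transfer, ${}^{*}(B_1\sqcup\ldots\sqcup B_r\sqcup D)={}^{*}B_1\sqcup\ldots\sqcup{}^{*}B_r\sqcup{}^{*}D$, and iterating this $k$ times, ${}^{k*}(\N^k)={}^{k*}B_1\sqcup\ldots\sqcup{}^{k*}B_r\sqcup{}^{k*}D$. Now the tuple $(\xi,{}^*\xi,\ldots,{}^{(k-1)*}\xi)$ lies in ${}^{k*}(\N^k)$ and its entries are strictly increasing — indeed each successive star map strictly increases an infinite hyper-natural and maps $\hN\setminus\N$ into the next level, so using the ``initial segment'' facts listed in Section \ref{sec-iterated} one checks $\xi<{}^*\xi<{}^{**}\xi<\ldots$, hence the tuple avoids ${}^{k*}D$. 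Therefore it falls into exactly one piece ${}^{k*}B_i$; fix that $i$. By construction $B_i\cap A^{<}=A_i^{<}$, and another transfer argument shows the tuple actually lies in ${}^{k*}A_i^{<}$ (the strict-increase defect set is again of the ``$D$-type'' and is avoided). So the hypothesis of Lemma \ref{lemma1} is met with $A=A_i^{<}$.

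Applying Lemma \ref{lemma1} to $A=A_i^{<}$ yields an infinite set $H=\{h_1<h_2<\ldots\}\subseteq\N$ with $(h_{n_1},\ldots,h_{n_k})\in A_i^{<}$ for all $n_1<\ldots<n_k$; equivalently, every $k$-subset of $H$ belongs to $C_i$, i.e. $[H]^k\subseteq C_i$, which is the conclusion. The main obstacle — more a matter of care than of depth — is the bookkeeping in the second paragraph: one must be sure that the ordering conventions chosen to turn the partition of $[\N]^k$ into a partition of $\N^k$ are transfer-compatible, so that membership of the strictly-increasing hyper-tuple in ${}^{k*}B_i$ genuinely forces membership in ${}^{k*}A_i^{<}$. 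This hinges precisely on the elementary facts about iterated stars recorded just before the lemma (that $\hN$ is an initial segment of $\hhN$, that ${}^*\xi>\xi$ for infinite $\xi$, and that the star map sends $\hN\setminus\N$ into $\hhN\setminus\hN$), so no new ideas beyond those are needed.
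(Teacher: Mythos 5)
Your proof is correct and follows essentially the same route as the paper's: identify $[\N]^k$ with the set of strictly increasing $k$-tuples in $\N^k$, use transfer on the $k$-fold iterated star map to colour the tuple $(\xi,{}^*\xi,\ldots,{}^{(k-1)*}\xi)$, and invoke Lemma \ref{lemma1}. Your extra bookkeeping with the defect set $D$ is just a more explicit version of the paper's identification of $[\N]^k$ with the upper diagonal of $\N^k$.
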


\begin{proof}
Identify the family of $k$-sets $[\N]^k$ with the upper-diagonal
in the Cartesian product $\N^k$:
$$\left\{(n_1,n_2,\ldots,n_k)\in\N^k\,\big|\,
n_1<n_2<\ldots<n_k\right\}.$$

By applying \emph{transfer} to the $k$-iterated star map,
one gets that ${}^{k*}([\hN]^k)=[{}^{k*}\N]^k$,
and one has the following finite coloring:
$$[{}^{k*}\N]^k\ =\ {}^{k*}C_1\sqcup\ldots\sqcup{}^{k*}C_r.$$

Now fix any infinite $\xi\in\hN$, and let
$i$ be such that the ordered $k$-tuple
$(\xi,{}^*\xi,\ldots,{}^{(k-1)*}\xi)\in{}^{k*}C_i$
(notice that $\xi<{}^*\xi<\ldots<{}^{(k-1)*}\xi$).
By the Lemma \ref{lemma1}, we get the existence of an infinite
$H\subseteq\N$ such that $[H]^k\subseteq[C_i]^k$.
\end{proof}

\bigskip
\section{
Hyper-natural numbers as representatives of ultrafilters}

Recall that
there is a canonical way of associating
an ultrafilter to every element $\alpha\in\hN_0$
(see \emph{e.g.} \cite{ch,nr,dn5}).
Namely, one takes the family of those sets of natural
numbers whose hyper-extensions contain $\alpha$:
$$\UU_\alpha\ =\ \{A\subseteq\N_0\mid \alpha\in\hA\}.$$

It is readily verified from the properties of
hyper-extensions that $\UU_\alpha$ is indeed an
ultrafilter, called the \emph{ultrafilter generated}
by $\alpha$. Notice that $\UU_\alpha$ is principal if
and only if $\alpha\in\N_0$ is finite.
Notice also that if $\U=\UU_\alpha$ is generated by $\alpha$,
then $h\,\U$ is generated by $h\alpha$.

\medskip
\begin{definition}
We say that two elements $\alpha,\beta\in\hN_0$ are $u$-\emph{equivalent},
and write $\alpha\ueq\beta$, if they generate the same ultrafilter:
$\UU_\alpha=\UU_\beta$.
\end{definition}

\medskip
So, $\alpha\ueq\beta$ when $\alpha\in\hA\Leftrightarrow\beta\in\hA$
for all $A\subseteq\N_0$.
Since every ultrafilter $\U$ on $\N_0$ is a family
of $\mathfrak{c}$-many sets with the
finite intersection property, by the hypothesis of
$\mathfrak{c}^+$-\emph{enlargement}
there exists an element $\alpha\in\bigcap_{A\in\U}\hA$.
This means that every ultrafilter $\U=\UU_\alpha$ is generated by some
number $\alpha\in\hN_0$.

\smallskip
The following properties are readily verified:

\medskip
\begin{proposition}
Let $\alpha\ueq\alpha'$ be two $u$-equivalent hyper-natural
numbers, and let $n\in\N$ be finite. Then:

\smallskip
\begin{enumerate}
\item
$\alpha\pm n\,\ueq\,\alpha'\pm n$,
\item
$n\cdot\alpha\,\ueq\, n\cdot\alpha'$,
\item
$\alpha/n\,\ueq\,\alpha'/n$, provided $\alpha$ is divisible by $n$.
\end{enumerate}
\end{proposition}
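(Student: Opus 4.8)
The plan is to reduce all three items to the single observation that $u$-equivalence is preserved under applying the hyper-extension of any function: if $f\colon\N_0\to\N_0$ and $\alpha\ueq\alpha'$, then $\hf(\alpha)\ueq\hf(\alpha')$. Indeed, fix $A\subseteq\N_0$. Transfer of the tautology $\forall x\in\N_0\,\bigl(f(x)\in A\Leftrightarrow x\in f^{-1}(A)\bigr)$ yields $\hf(\alpha)\in\hA\Leftrightarrow\alpha\in{}^*(f^{-1}(A))$, and likewise with $\alpha'$ in place of $\alpha$. Since $\alpha\ueq\alpha'$ says exactly that $\alpha$ and $\alpha'$ lie in the same hyper-extensions of subsets of $\N_0$, the two right-hand sides coincide, so $\hf(\alpha)\in\hA\Leftrightarrow\hf(\alpha')\in\hA$; as $A$ is arbitrary, $\hf(\alpha)\ueq\hf(\alpha')$. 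Everything else is a matter of choosing the right $f$ and coping with the fact that subtraction and division are not everywhere defined on $\N_0$.

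For item (1) with the $+$ sign, I would simply apply the observation to $f(x)=x+n$. For the $-$ sign, first note that $\UU_\alpha$ is principal precisely when $\alpha$ is finite, so from $\UU_\alpha=\UU_{\alpha'}$ it follows that $\alpha$ is finite iff $\alpha'$ is; in the finite case $\{\alpha\}\in\UU_\alpha=\UU_{\alpha'}$ gives $\alpha=\alpha'$ and there is nothing to prove, while in the infinite case I would apply the observation to the truncated subtraction $f(x)=\max(x-n,0)$, which by transfer satisfies $\hf(\xi)=\xi-n$ for every infinite $\xi\in\hN_0$ --- in particular for $\alpha$ and $\alpha'$, both of which exceed $n$.

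Item (2) is the observation applied to $f(x)=n\cdot x$. For item (3), I would first record that ``$\alpha$ is divisible by $n$'' means $\alpha\in{}^*(n\N_0)$, so this property, like every membership in a hyper-extension of a subset of $\N_0$, is the same for $\alpha$ and $\alpha'$; in particular $\alpha'/n$ makes sense. Then apply the observation to $f(x)=\lfloor x/n\rfloor$, whose hyper-extension agrees with division by $n$ on ${}^*(n\N_0)$ by transfer; since $\alpha,\alpha'\in{}^*(n\N_0)$ we get $\alpha/n=\hf(\alpha)\ueq\hf(\alpha')=\alpha'/n$.

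I do not expect a genuine obstacle here: the proposition is essentially a formal unfolding of the definition of $\ueq$ together with transfer. The only point requiring care is bookkeeping --- subtraction and division are partial operations on $\N_0$, so one must replace them by total functions agreeing with them on the relevant hyper-domain (as above), or, equivalently, argue directly via the shift sets $A-n$ and the quotient sets $A/n$ using transfer of the corresponding membership relations. Both routes are routine.
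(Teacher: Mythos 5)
Your proof is correct; the paper itself offers no argument here (it states the proposition as ``readily verified''), and your reduction of all three items to the single lemma that $\alpha\ueq\alpha'$ implies $\hf(\alpha)\ueq\hf(\alpha')$ for any $f:\N_0\to\N_0$ --- proved by transferring $f(x)\in A\Leftrightarrow x\in f^{-1}(A)$ --- is exactly the intended routine verification, with the partiality of subtraction and division handled properly via total surrogates agreeing with them on the relevant hyper-domains. Nothing is missing.
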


\medskip
Remark that in general sums in $\hN_0$
are not coherent with $u$-equivalence,
\emph{i.e.} it can well be the case that $\alpha\ueq\alpha'$
and $\beta\ueq\beta'$, but $\alpha+\beta\nueq\alpha'+\beta'$.

\smallskip
We now extend the notion of generated ultrafilter and also consider
elements
$\nu\in{}^{k*}\N$ in iterated hyper-extensions of $\N$,
by putting
$$\UU_\nu\ =\ \{A\subseteq\N_0\mid \nu\in{}^{k*}A\}.$$

The $u$-equivalence relation is extended accordingly
to all pairs of numbers in the following union
$${}^\star\N_0\ =\ \bigcup_{k\in\N}{}^{k*}\N_0.$$

In general, for every $A\subseteq \N$,
the set ${}^\star A=\bigcup_{k\in\N}{}^{k*}A$
can be seen as the direct limit of
the finitely iterated hyper-extensions of $A$;
and similarly for functions. In consequence,
the map $\star$ itself is a nonstandard embedding,
\emph{i.e.} it satisfies the \emph{transfer principle}.
Since $*$ is assumed to be $\mathfrak{c}^+$-enlarging,
it cab be easily verified that the same
property also holds for $\star$.

\smallskip
Remark that the above definitions are coherent.
In fact, by starting from the equivalence
$n\in A\Leftrightarrow n\in\hA$ which holds
for all $n\in\N_0$, one can easily show
that $\nu\in{}^{k*}A\Leftrightarrow\nu\in{}^{h*}A$
for all $\nu\in{}^{k*}\N_0$ and $h>k$.
In consequence, for all $\nu\in{}^\star\N_0$,
one has $\nu\ueq{}^*\nu$, and hence
${}^{k*}\nu\ueq{}^{h*}\nu$ for all $k,h$.
(A detailed study of $\ueq$-equivalence in ${}^\star\N_0$
can be found in \cite{lu-tesi}.)

\smallskip
We shall use the following characterization of pseudo-sums
of ultrafilters.

\medskip
\begin{proposition}\label{prop-ultrasum}
Let $\alpha,\beta\in\hN_0$ and $A\subseteq\N_0$. Then
$A\in\UU_\alpha\oplus\UU_\beta$ if and only if the sum
$\alpha+{}^*\beta\in{}^{**}A$.
\end{proposition}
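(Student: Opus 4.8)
The plan is to unwind the definitions of $\oplus$ and of the generated ultrafilter $\UU_\alpha$, and then reduce to the defining property of the pseudo-sum via \emph{transfer}. First I would recall that by definition $A\in\UU_\alpha\oplus\UU_\beta$ means $\{n\in\N_0\mid A-n\in\UU_\beta\}\in\UU_\alpha$, where $A-n=\{m\mid m+n\in A\}$. Writing $B=\{n\in\N_0\mid A-n\in\UU_\beta\}$, the condition $B\in\UU_\alpha$ says precisely $\alpha\in\hB$. So the task is to characterize $\hB$ in terms of $\alpha$, $\beta$, and ${}^{**}A$.

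Next I would compute $\hB$ by applying the star map to the definition of $B$. Since $B=\{n\in\N_0\mid A-n\in\UU_\beta\}=\{n\in\N_0\mid \beta\in {}^*(A-n)\}$, and since ${}^*(A-n)={}^*A - n$ (the shift commutes with hyper-extension, by transfer of the identity $A-n=\{m\mid m+n\in A\}$ for each fixed $n\in\N_0$), we get $B=\{n\in\N_0\mid \beta\in \hA-n\}=\{n\in\N_0\mid \beta+n\in\hA\}$. Now apply $*$ to this: for $\xi\in\hN_0$ we should have $\xi\in\hB \Leftrightarrow {}^*\beta+\xi\in{}^{**}A$. The subtle point, and the one I expect to be the main obstacle, is justifying this last equivalence carefully: $B$ is defined using the fixed parameter $\beta\in\hN_0$, so transferring ``$n\in B \Leftrightarrow \beta+n\in\hA$'' requires the parameter to be hyper-extended as well, giving ``$\xi\in\hB \Leftrightarrow {}^*\beta+\xi\in{}^{**}A$'' — this is exactly the kind of care with iterated hyper-extensions flagged earlier in Section~\ref{sec-iterated} (one must write ${}^*\beta$, not $\beta$). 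Here one uses that $\hA$ and $\hB$ are themselves legitimate objects to which the star map applies, and that ${}^*(\hA)={}^{**}A$.

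Finally, I would instantiate $\xi=\alpha$: combining $\alpha\in\hB$ (which is $A\in\UU_\alpha\oplus\UU_\beta$) with the equivalence just established yields $A\in\UU_\alpha\oplus\UU_\beta \Leftrightarrow {}^*\beta+\alpha\in{}^{**}A \Leftrightarrow \alpha+{}^*\beta\in{}^{**}A$, which is the claim. The only things being used beyond bookkeeping are the transfer principle (twice: once to get $\hA-n={}^*(A-n)$, once to transfer the description of $B$ to $\hB$ with the parameter $\beta$ replaced by ${}^*\beta$) and the identity ${}^*(\hA)={}^{**}A$; no enlargement hypothesis is needed for this proposition.
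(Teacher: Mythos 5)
Your proposal is correct and follows essentially the same route as the paper: your set $B$ is the paper's $\widehat{A}$, and the key step in both arguments is computing ${}^*\widehat{A}=\{\gamma\in\hN_0\mid\gamma+{}^*\beta\in{}^{**}A\}$, with the parameter $\beta$ correctly promoted to ${}^*\beta$ under the star map. Your explicit attention to that promotion is exactly the point the paper's one-line computation relies on.
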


\begin{proof}
Consider the set $\widehat{A}=\{n\in\N_0\mid A-n\in\UU_\beta\}$,
and notice that its hyper-extension
${}^*\widehat{A}={}^*\{n\in\N_0\mid n+\beta\in\hA\}=
\{\gamma\in\hN_0\mid\gamma+{}^*\beta\in{}^{**}A\}$.
Then the following equivalences yield the thesis:
$$A\in\UU_\alpha\oplus\UU_\beta\ \Longleftrightarrow\
\widehat{A}\in\UU_\alpha\ \Longleftrightarrow\
\alpha\in{}^*\widehat{A}\ \Longleftrightarrow\
\alpha+{}^*\beta\in{}^{**}A.$$
\end{proof}

\medskip
We already mentioned that when $\alpha\ueq\alpha'$
and $\beta\ueq\beta'$, one cannot conclude that
$\alpha+\beta\nueq\alpha'+\beta'$.
However, under the same assumptions, one has that
$\alpha+{}^*\beta\ueq\alpha+{}^*\beta'$ in $\hhN$,
as they generate the same ultrafilter
$\UU_\alpha\oplus\UU_\beta=\UU_{\alpha'}\oplus\UU_{\beta'}$.

\smallskip
The characterization of pseudo-sums as given
above can be extended to linear combinations of
ultrafilters in a straightforward manner.

\medskip
\begin{corollary}\label{linearcomb}
For every $\xi_0,\ldots,\xi_k\in\hN_0$, and for every
$a_0,\ldots,a_k\in\N_0$, the linear combination
$a_0\U\oplus \ldots\oplus a_k\U$ is the ultrafilter
generated by the element
$a_0\xi+\ldots+a_k{}^{k*}\xi\in{}^{(k+1)*}\N_0$.
\end{corollary}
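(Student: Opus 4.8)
The plan is to build this up from Proposition \ref{prop-ultrasum} by a straightforward induction on $k$, combined with the elementary observation that $h\U$ is generated by $h\alpha$ whenever $\U$ is generated by $\alpha$. First I would fix $\xi_0,\ldots,\xi_k\in\hN_0$ generating ultrafilters $\U_0,\ldots,\U_k$ respectively (the corollary as stated in the paper uses a single $\xi$, but the argument is identical, so I will treat the general case and specialize). I want to show that the ultrafilter $a_0\U_0\oplus\ldots\oplus a_k\U_k$ is generated by $\eta:=a_0\xi_0 + a_1\,{}^*\xi_1+\ldots+a_k\,{}^{k*}\xi_k \in {}^{(k+1)*}\N_0$, i.e.\ that for every $A\subseteq\N_0$ one has $A\in a_0\U_0\oplus\ldots\oplus a_k\U_k$ if and only if $\eta\in{}^{(k+1)*}A$.

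For the base case $k=0$: $a_0\U_0$ is generated by $a_0\xi_0$, since $A\in a_0\U_0 \Leftrightarrow A/a_0\in\U_0 \Leftrightarrow \xi_0\in{}^*(A/a_0) \Leftrightarrow a_0\xi_0\in{}^*A$, the last step by transfer of the defining equivalence $n\in A/a_0 \Leftrightarrow a_0 n\in A$ (when $a_0=0$ this degenerates to the principal ultrafilter $\UU_0$, consistent with $0\cdot\U=\UU_0$). For the inductive step, write $\V:=a_1\U_1\oplus\ldots\oplus a_k\U_k$, which by the inductive hypothesis (applied to the $k$ numbers $\xi_1,\ldots,\xi_k$, but with the star map iterated one time less on each, i.e.\ reading the statement ``inside" one hyper-extension) is generated by $\beta:=a_1\xi_1+a_2\,{}^*\xi_2+\ldots+a_k\,{}^{(k-1)*}\xi_k\in{}^{k*}\N_0$. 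Then $a_0\U_0\oplus\V = (a_0\U_0)\oplus\V$, and applying Proposition \ref{prop-ultrasum} with $\alpha:=a_0\xi_0$ (generating $a_0\U_0$ by the base case) and this $\beta$, we get that $A\in a_0\U_0\oplus\V$ iff $\alpha+{}^*\beta\in{}^{**}A$; but ${}^*\beta = a_1\,{}^*\xi_1 + a_2\,{}^{**}\xi_2+\ldots+a_k\,{}^{k*}\xi_k$ by transfer applied to $\beta$, so $\alpha+{}^*\beta = \eta$ up to the identification of iterated hyper-extensions, and Proposition \ref{prop-ultrasum} lands us in ${}^{**}(\text{something})$ where the ``something" is itself a $k$-fold iterated hyper-extension, giving ${}^{(k+1)*}A$ overall.

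The one subtlety I expect to be the main obstacle is bookkeeping the iterated star maps correctly: Proposition \ref{prop-ultrasum} is stated for $\alpha,\beta\in\hN_0$ and yields membership in ${}^{**}A$, so to apply it ``one level up" I need to invoke transfer of that proposition itself (equivalently, apply $*$ to the whole statement of Proposition \ref{prop-ultrasum}), concluding that for $\alpha\in{}^{k*}\N_0$, $\beta\in{}^{k*}\N_0$ with $\alpha$ generating a pseudo-factor, one has $A\in(\cdots)\oplus(\cdots)$ iff $\alpha+{}^*\beta\in{}^{(k+1)*}A$ — and I must check this transfer is legitimate, which it is because the definition of $\UU_\gamma$ for $\gamma\in{}^\star\N_0$ and of $\oplus$ are all first-order expressible and $\star$ satisfies transfer, as noted in the paragraph preceding Proposition \ref{prop-ultrasum}. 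Once the indices are tracked carefully, the computation $\eta = a_0\xi_0 + {}^*(a_1\xi_1+\ldots+a_k\,{}^{(k-1)*}\xi_k) = a_0\xi_0+a_1\,{}^*\xi_1+\ldots+a_k\,{}^{k*}\xi_k$ is just linearity of the star map on finite sums and scalar multiples, and the proof is complete by specializing $\xi_0=\ldots=\xi_k=\xi$ (so that, using $\xi\ueq{}^*\xi\ueq\cdots$, all the $\U_i$ coincide with $\U=\UU_\xi$).
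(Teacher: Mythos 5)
Your proof is correct and is precisely the ``straightforward'' inductive extension of Proposition \ref{prop-ultrasum} that the paper has in mind (the paper in fact omits the argument entirely, asserting only that the characterization ``can be extended \ldots in a straightforward manner''). You also correctly isolate the one genuine subtlety --- applying the proposition with the second generator $\beta$ living in ${}^{k*}\N_0$ rather than $\hN_0$, which is justified either by transferring the proposition or by rerunning its proof at the higher level --- so nothing is missing.
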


\medskip
The class of idempotent ultrafilters was first isolated
to provide a simplified proof of
Hindman's Theorem, a cornerstone of combinatorics
of numbers.

\medskip
\noindent
\textbf{Theorem} (Hindman)
\emph{For every finite coloring $\N=C_1\sqcup\ldots\sqcup C_r$
there exists an infinite set $X=\{x_1<x_2<\ldots<x_n<\ldots\}$
such that all its finite sums are monochromatic,
\emph{i.e.} there exists $i$ such that:}
$$\text{FS}(X)\ =\ \left\{\sum_{i\in F}x_i\,\Big|\,
F\subset \N\ \text{nonempty finite}\right\}\subseteq C_i.$$

\medskip
Starting from the ultrafilter proof of the above theorem,
a whole body of new combinatorial results have been then
obtained by exploiting the algebraic properties of the space
$(\beta\N_0,\oplus)$ and of its generalizations
(see the monograph \cite{hs}).

\smallskip
By Proposition \ref{prop-ultrasum}, it
directly follows that

\medskip
\begin{proposition}\label{prop-idempotent}
Let $\xi\in\hN$.
The ultrafilter $\UU_\xi$ is idempotent if
and only if $\xi\ueq\xi+{}^*\xi$.
\end{proposition}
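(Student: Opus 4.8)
The plan is to obtain the statement as an immediate application of Proposition \ref{prop-ultrasum}. First I would specialize that proposition to $\alpha=\beta=\xi$: for every $A\subseteq\N_0$ one has $A\in\UU_\xi\oplus\UU_\xi$ if and only if $\xi+{}^*\xi\in{}^{**}A$. But the right-hand side is, by the very definition of the ultrafilter generated by an element of ${}^{**}\N_0$, exactly the assertion $A\in\UU_{\xi+{}^*\xi}$. Hence $\UU_\xi\oplus\UU_\xi=\UU_{\xi+{}^*\xi}$ as ultrafilters on $\N_0$.

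Next I would recall that, by definition, $\UU_\xi$ is idempotent precisely when it is non-principal and satisfies $\UU_\xi\oplus\UU_\xi=\UU_\xi$. Combining this with the identity just established, the equation $\UU_\xi\oplus\UU_\xi=\UU_\xi$ is equivalent to $\UU_{\xi+{}^*\xi}=\UU_\xi$, that is, to $\xi\ueq\xi+{}^*\xi$. So the only thing left is to reconcile the non-principality clause with the condition $\xi\ueq\xi+{}^*\xi$.

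For that last point I would argue that $\xi\ueq\xi+{}^*\xi$ already forces $\xi$ to be infinite. Indeed, if $\xi\in\N$ were finite then ${}^*\xi=\xi$, so $\xi+{}^*\xi=2\xi$, and $\UU_{2\xi}=\UU_\xi$ would entail $2\xi=\xi$ — impossible for $\xi\in\N$, since distinct natural numbers generate distinct principal ultrafilters. Thus $\xi\ueq\xi+{}^*\xi$ implies $\xi\in\hN\setminus\N$, so $\UU_\xi$ is non-principal and hence, by the previous paragraph, idempotent; the converse implication is immediate. I do not expect any genuine obstacle here: the mathematical content is entirely carried by Proposition \ref{prop-ultrasum}, and the only point that deserves a word of care is the bookkeeping around the convention that ``idempotent'' excludes the principal ultrafilter $\UU_0$ — which is harmless, since $\UU_0$ is never of the form $\UU_\xi$ with $\xi\in\hN$ (take $A=\N$: then $\xi\in\hN={}^*\N$ while $0\notin\N$).
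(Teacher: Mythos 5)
Your proposal is correct and follows exactly the route the paper takes: the paper derives the proposition ``directly'' from Proposition \ref{prop-ultrasum} by the same specialization $\alpha=\beta=\xi$, which gives $\UU_\xi\oplus\UU_\xi=\UU_{\xi+{}^*\xi}$. Your extra paragraph reconciling the non-principality clause (ruling out finite $\xi$ and the principal case $\UU_0$) is sound bookkeeping that the paper leaves implicit.
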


\medskip
Next, we show a general result connecting linear combinations of a
given idempotent ultrafilter, and $u$-equivalence of the
corresponding strings of coefficients.

\medskip
\begin{theorem}\label{th-idemstring}
Let $a_0,a_1,\ldots,a_k,b_0,b_1,\ldots,b_h\in\N_0$.
Then the following are equivalent:

\smallskip
\begin{enumerate}
\item
$\langle a_0,a_1,\ldots,a_k\rangle\,\ueqs\,\langle b_0,b_1,\ldots,b_h\rangle$.

\smallskip
\item
For every idempotent ultrafilter $\U$:
$$a_0\,\U\oplus a_1\,\U\oplus \ldots \oplus a_k\,\U\,=\,
b_0\,\U\oplus b_1\,\U\oplus \ldots \oplus b_h\,\U.$$

\smallskip
\item
For every $\xi\in\hN_0$ such that the generated ultrafilter $\UU_\xi$
is idempotent:
$$a_0\,\xi+a_1{}^*\xi+\ldots+a_k{}^{k*}\xi\ \ueq\
b_0\,\xi+b_1{}^*\xi+\ldots+b_h{}^{h*}\xi.$$
\end{enumerate}
\end{theorem}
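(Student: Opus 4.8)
The plan is to prove the chain of implications $(1)\Rightarrow(3)\Rightarrow(2)\Rightarrow(1)$, using Corollary \ref{linearcomb} to translate between linear combinations of ultrafilters and the corresponding elements of iterated hyper-extensions, and Proposition \ref{prop-idempotent} to exploit idempotency.

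For $(1)\Rightarrow(3)$, I would argue that $\ueqs$ is the \emph{smallest} equivalence relation generated by the three listed rules, so it suffices to check that the relation ``$R(\sigma,\tau)$ holds iff $\sum a_i{}^{i*}\xi\ueq\sum b_j{}^{j*}\xi$ for every $\xi$ generating an idempotent ultrafilter'' is an equivalence relation satisfying those three rules; then $\ueqs\ \subseteq R$, which is exactly what we want. Reflexivity, symmetry and transitivity of $R$ are immediate. The coherence with concatenation requires the key observation that if $\sigma$ has length $k+1$ and $\tau$ has length $h+1$, then the element attached to $\sigma^\frown\tau$ is $\sum_{i\le k}a_i{}^{i*}\xi + {}^{(k+1)*}\!\bigl(\sum_{j\le h} b_j{}^{j*}\xi\bigr)$: that is, concatenation corresponds to adding the $(k+1)$-times-starred version of the second element, and $u$-equivalence is preserved under this operation (this is exactly the stability remark made just after Proposition \ref{prop-ultrasum}, applied iteratively, together with $\nu\ueq{}^*\nu$). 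For the rule $\varepsilon\ueqs\langle 0\rangle$: the empty string gives (conventionally) the element $0$, and $\langle 0\rangle$ gives $0\cdot\xi=0$, so both sides coincide. The only genuinely substantial rule is $\langle a\rangle\ueqs\langle a,a\rangle$: here we must show $a\xi\ueq a\xi+a{}^*\xi$ whenever $\UU_\xi$ is idempotent. By Proposition \ref{prop-idempotent}, $\xi\ueq\xi+{}^*\xi$, and multiplying through by $a$ (using part (2) of the Proposition on $u$-equivalence of hyper-naturals) gives $a\xi\ueq a(\xi+{}^*\xi)=a\xi+a{}^*\xi$, as desired. Chaining these rules through the inductive definition of $\ueqs$ then yields (3). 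I expect this verification of the three closure properties — especially getting the bookkeeping of iterated stars in the concatenation step right — to be the main technical point.

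For $(3)\Rightarrow(2)$: given an idempotent ultrafilter $\U$, by the $\mathfrak{c}^+$-enlargement property there is $\xi\in\hN_0$ with $\U=\UU_\xi$, and this $\xi$ generates an idempotent ultrafilter. By Corollary \ref{linearcomb}, $a_0\U\oplus\cdots\oplus a_k\U=\UU_\alpha$ where $\alpha=a_0\xi+\cdots+a_k{}^{k*}\xi$, and similarly $b_0\U\oplus\cdots\oplus b_h\U=\UU_\beta$ with $\beta=b_0\xi+\cdots+b_h{}^{h*}\xi$. Assumption (3) says $\alpha\ueq\beta$, i.e. $\UU_\alpha=\UU_\beta$, which is precisely (2).

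For $(2)\Rightarrow(1)$: this is the only direction needing a genuine idempotent ultrafilter to exist as an external witness. Fix any idempotent ultrafilter $\U$ (these exist by the cited Zorn's-lemma argument). Suppose $\langle a_0,\ldots,a_k\rangle\nueqs\langle b_0,\ldots,b_h\rangle$; I want to produce an idempotent $\U$ distinguishing the two linear combinations, contradicting (2). After normalizing both strings into their reduced canonical form under $\ueqs$ (delete zeros, collapse maximal runs of equal entries — so up to $\ueqs$ every string has a unique reduced form $\langle c_1,\ldots,c_m\rangle$ with all $c_i\ne 0$ and $c_i\ne c_{i+1}$, together with a possible leading-zero subtlety handled by the $\varepsilon\ueqs\langle 0\rangle$ rule), the hypothesis means the two reduced forms genuinely differ. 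It then suffices, using $(3)\Leftrightarrow(2)$ already established and Corollary \ref{linearcomb}, to exhibit $\xi$ generating an idempotent ultrafilter with $\sum a_i{}^{i*}\xi\nueq\sum b_j{}^{j*}\xi$; equivalently, to separate two distinct reduced linear combinations $c_1{}^{?}\xi+\cdots$ by a set. One clean way is to invoke that a Hindman-type / idempotent $\U$ can be chosen so that the map sending a reduced coefficient string to its pseudo-sum over $\U$ is injective — this is essentially the content of Milliken–Taylor's theorem (which the paper proves), or can be derived directly from the structure of idempotents. I expect \emph{this} direction to be the main obstacle: showing that non-$\ueqs$-equivalent reduced strings really do yield distinct ultrafilters for some idempotent $\U$ requires a separation argument, and the natural tool is exactly the injective partition-regularity machinery the paper is building, so some care is needed to avoid circularity — presumably one uses a direct combinatorial argument about idempotents (e.g. choosing $\U$ concentrated on a rapidly-growing sparse set so that sums $c_1 x_1+\cdots+c_m x_m$ with distinct growth rates are separated) rather than the full Milliken–Taylor theorem.
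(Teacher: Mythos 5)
Your directions $(1)\Rightarrow(3)$ and $(3)\Rightarrow(2)$ are essentially sound and amount to the same induction the paper runs, just transported to a different level: the paper proves $(1)\Rightarrow(2)$ directly at the ultrafilter level, where the rule $\langle a\rangle\ueqs\langle a,a\rangle$ becomes $a\,\U\oplus a\,\U=a(\U\oplus\U)=a\,\U$ and concatenation becomes associativity of $\oplus$, and then gets $(2)\Leftrightarrow(3)$ from Corollary \ref{linearcomb}. Your version, working on the hyper-natural representatives, is correct but costs you the extra bookkeeping of generalizing Proposition \ref{prop-ultrasum} to iterated levels in the concatenation step (your element $\sum_{i\le k}a_i{}^{i*}\xi+{}^{(k+1)*}\bigl(\sum_{j\le h}b_j{}^{j*}\xi\bigr)$ and the claim that $\ueq$ is stable under this operation); that is the right statement, but it is exactly the content of Corollary \ref{linearcomb}, so the route through $(2)$ is shorter.

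The genuine gap is $(2)\Rightarrow(1)$, precisely where you predicted trouble, and you do not close it. After reducing both strings to normal form, you need some idempotent $\U$ (one witness suffices, since the negation of $(2)$ is existential) for which two distinct reduced linear combinations $c_1\U\oplus\ldots\oplus c_m\U$ and $d_1\U\oplus\ldots\oplus d_l\U$ differ. Your ``rapidly-growing sparse set'' heuristic does not yet produce this: Lemma \ref{lemma2} (Milliken--Taylor) tells you that every member of $c_1\U\oplus\ldots\oplus c_m\U$ \emph{contains} sums of the prescribed shape, which is not the same as exhibiting a set that \emph{belongs} to one ultrafilter and not the other, and no such set is constructed. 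The paper resolves this direction by citing an external result, Maleki's Theorem 2.19 of \cite{ma}, which is stronger than what you need (for \emph{every} idempotent $\U$, equality of the two reduced combinations forces equality of the coefficient strings) and is itself a nontrivial theorem, not derivable from the machinery developed in the paper up to that point. Without either that citation or a completed substitute separation argument, your proof of $(2)\Rightarrow(1)$ --- and hence of the theorem --- is incomplete.
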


\begin{proof}
$(1)\Rightarrow(2)$.
For every string $\sigma=\langle d_0,d_1,\ldots,d_n\rangle$
of numbers $d_i\in\N_0$, and for every idempotent ultrafilter $\U$,
denote by
$$\oplus_\U(\sigma)\ =\
d_0\,\U\oplus d_1\,\U\oplus \ldots \oplus d_n\,\U.$$
We have to show that
$\sigma\ueqs\tau\Rightarrow\oplus_\U(\sigma)=\oplus_\U(\tau)$.
By agreeing that
$\oplus_\U(\varepsilon)=\UU_0$, one trivially has
$\oplus_\U(\varepsilon)=\oplus_\U(\langle 0\rangle)$.
Moreover, $\oplus(\langle a\rangle)=\oplus_\U(\langle a,a\rangle)$
because $a\,\U\oplus a\,\U=a(\U\oplus\U)=a\,\U$.
Now let $\sigma\ueqs\sigma'$ and $\tau\ueqs\tau'$,
where we assume by inductive hypothesis that
$\oplus_\U(\sigma)=\oplus_\U(\sigma')$ and
$\oplus_\U(\tau)=\oplus_\U(\tau')$.
Then, by associativity of the pseudo-sum, it follows that
$$\oplus_\U(\sigma^\frown\tau)\ =\ [\oplus_\U(\sigma)]\oplus[\oplus_\U(\tau)]\ =\
[\oplus_\U(\sigma')]\oplus[\oplus_\U(\tau')]\ =\ \oplus_\U(\sigma'^\frown\tau').$$

\smallskip
$(2)\Rightarrow(1)$.
Assume that
$\langle a_0,a_1,\ldots,a_k\rangle\not\!\!\!\ueqs\langle b_0,b_1,\ldots,b_h\rangle$.
By the previous implication, we can assume without loss
of generality that
$a_i\ne a_{i+1}$ for $i<k$, and that and $b_j\ne b_{j+1}$ for $j<h$.
Then we apply the following known result:

\smallskip
\begin{itemize}
\item
(\cite{ma} Theorem 2.19.)
\emph{Let $a_0,\ldots,a_k,b_0,\ldots,b_h\in\N$ so
that $a_i\ne a_{i+1}$ and $b_j\ne b_{j+1}$ for any $i<k$ and $j<h$.
If $a_0\,\U\oplus\ldots\oplus a_k\,\U=b_0\,\U\oplus\ldots\oplus b_h\,\U$
for some idempotent $\U$ then $k=h$ and $a_i=b_i$ for all $i$.}
\end{itemize}

\smallskip
$(2)\Leftrightarrow(3)$.
By the $\mathfrak{c}^+$-enlargement property, every
ultrafilter $\U$ is generated by some element $\xi\in\hN$,
\emph{i.e.} $\U=\UU_\xi$. So, the thesis
is a particular case of Corollary \ref{linearcomb}.
\end{proof}

\medskip
We shall use the above characterization to justify a neat
formalism which is suitable to handle
idempotent ultrafilters and their linear combinations.
As a first relevant example, let us give a nonstandard ultrafilter
proof of Milliken-Taylor's Theorem, a strengthening of Hindman's Theorem.

\medskip
\begin{lemma}\label{lemma2}
Let $\U$ be an idempotent ultrafilter, and let
$a_0,\ldots,a_k\in\N$. For every
$A\in a_0\U\oplus\ldots\oplus a_k\U$
there exists an infinite set of natural numbers
$$X\ =\ \{x_1<x_2<\ldots<x_n<\ldots\ \}$$
with the property that for every increasing sequence $I_0<\ldots<I_k$
of nonempty finite sets of natural numbers
(\emph{i.e.} $\max I_i<\min I_{i+1}$), the sum
$$\sum_{i\in I_0}a_0 x_i\,+\,\ldots\,+\,
\sum_{i\in I_k}a_k x_i\in A.$$
\end{lemma}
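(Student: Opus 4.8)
The plan is to work with a hyper-natural representative $\xi \in \hN$ of $\U$, so that $\UU_\xi = \U$ is idempotent, and then iterate the $\mathfrak{c}^+$-enlargement argument exactly as in the proof of Lemma \ref{lemma1}, but now tracking sums of shifted copies of $\xi$. By Corollary \ref{linearcomb}, the set $A$ lies in the ultrafilter generated by $\zeta = a_0\xi + a_1{}^*\xi + \ldots + a_k{}^{k*}\xi \in {}^{(k+1)*}\N_0$, i.e. $\zeta \in {}^{(k+1)*}A$. The idempotency condition $\xi \ueq \xi + {}^*\xi$ from Proposition \ref{prop-idempotent}, applied repeatedly together with the coherence facts ${}^{j*}\xi \ueq {}^{(j+1)*}\xi$, is what will let me "split" a copy of $a_i x$ into a finite sum $\sum_{i \in I_i} a_i x_i$ while staying in the same ultrafilter. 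The key reformulation I would use is: for finitely many already-chosen naturals $x_1 < \ldots < x_n$ and any admissible assignment, the membership statements about $A$ translate, via transfer up the tower of star maps, into the single assertion that a suitable sum of the form (finite standard part) $+$ (tail built from $\xi, {}^*\xi, \ldots$) belongs to an appropriate iterated hyper-extension; this is the analogue of the sets $X, X_n, X_{nn'}$ in Lemma \ref{lemma1}.

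Concretely, I would build $X = \{x_1 < x_2 < \ldots\}$ recursively. For the bookkeeping, at stage $n$ I maintain the invariant that for every way of distributing $\{1,\ldots,n\}$ into an increasing tuple of (possibly empty, except the analysis needs care with which blocks are allowed to be empty) finite sets $I_0 < \ldots < I_k$, writing $s_j = \sum_{i \in I_j} a_j x_i$, the hyper-natural number obtained by inserting shifted copies of $\xi$ into the "gaps" — namely $s_0 + (\text{a string of } {}^{m*}\xi\text{'s}) + s_1 + \cdots$ — lies in the corresponding iterated hyper-extension of $A$. The crucial point that makes the recursion go through is that the relevant family of conditions is finite at each stage, so their intersection (after pulling everything down to $\N$ via the generated-ultrafilter correspondence) is a set belonging to $\U$, hence infinite, and I may pick $x_{n+1}$ in it larger than $x_n$. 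Idempotency enters precisely when I need to know that after choosing $x_{n+1}$ the new, longer list of conditions is again consistent: expanding $\zeta$ using $\xi \ueq \xi + {}^*\xi$ produces, in the generated ultrafilter, exactly the freedom to either start a new block or extend an old one with the element $x_{n+1}$.

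I expect the main obstacle to be the notational and combinatorial control of the recursion: precisely stating the invariant so that it is (a) strong enough to be propagated, (b) weak enough that the initial step and the "pick the next element" step both work, and (c) yields the final conclusion about all increasing sequences $I_0 < \ldots < I_k$ of nonempty finite sets. In particular one must be careful that the blocks $I_1, \ldots, I_k$ corresponding to the later coefficients can be "fed" new elements only after the earlier blocks are closed off, which is what forces the nested-intersection structure and what the strict inequalities ${}^{m*}\xi < {}^{(m+1)*}\xi$ encode. Following the template of Lemma \ref{lemma1} — where one keeps $\xi$ inside a growing finite intersection of hyper-extended sets and extracts the next natural number from the correspondingly growing finite intersection of ordinary sets — I would phrase the whole argument so that at step $n$ one has
$$\xi \ \in\ \bigcap_{(I_0,\ldots,I_k)} {}^*\!\big(Y_{(I_0,\ldots,I_k)}\big)$$
over the finitely many admissible distributions of $\{1,\ldots,n\}$, where $Y_{(I_0,\ldots,I_k)} \subseteq \N_0$ is the set of naturals $m$ for which the appropriate sum with $x_{n+1}$ replaced by $m$ keeps the relevant iterated-hyper-extension membership; then $x_{n+1}$ is chosen in $\bigcap_{(I_0,\ldots,I_k)} Y_{(I_0,\ldots,I_k)}$, which is in $\U$ and hence infinite. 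Verifying that this intersection is genuinely in $\U$ — i.e. that each $Y$ is — is where idempotency and the coherence of $\ueq$ with the tower of star maps do the real work, and writing that verification cleanly is the part I would spend the most care on.
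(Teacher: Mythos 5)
Your proposal follows essentially the same route as the paper's proof: fix a generator $\xi$ of $\U$, use Corollary \ref{linearcomb} together with the $u$-equivalences $\langle a_h,\ldots,a_k\rangle\ueqs\langle a_h,a_h,\ldots,a_k\rangle$ to maintain, for each partial block assignment, both an ``extend the current block'' and a ``close it and move to the next coefficient'' membership condition, and then extract $x_{n+1}$ from the resulting finite intersection of infinite subsets of $\N$. The invariant you defer is precisely the paper's pair of conditions indexed by increasing sequences $J_0<\ldots<J_h$ with $h\le k$ and $\max J_h\le n$, so the plan is sound.
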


\begin{proof}
By the $\mathfrak{c}^+$-enlargement property, we
can pick $\xi\in\hN$ with $\U=\UU_\xi$.
By Corollary \ref{linearcomb},
$$A\in a_0\U\oplus\ldots\oplus a_k\U\
\Longleftrightarrow\
a_0\xi+a_1{}^*\xi+\ldots+a_k{}^{k*}\xi\in{}^{(k+1)*}A,$$
and so we have that
\begin{eqnarray}
\nonumber
\xi & \in &
\Big\{\eta\in\hN\,\Big|\,
a_0\eta+a_1{}^*\xi+\ldots+a_k{}^{k*}\xi\in{}^{(k+1)*}A\Big\}
\\
\nonumber
{} & = & \hspace{-.15cm}
{}^*\Big\{x\in\N\,\Big|\, a_0 x+a_1\xi+\ldots+a_k{}^{(k-1)*}\xi\in{}^{k*}A\Big\}.
\end{eqnarray}

\smallskip
Since $\langle a_0,a_1,\ldots,a_k\rangle\ueqs\langle a_0,a_0,a_1,\ldots,a_k\rangle$,
we also have that
$a_0\xi+a_0{}^*\xi+a_1{}^{**}\xi+\ldots+a_k{}^{(k+1)*}\xi\in{}^{(k+2)*}A$,
and hence:
\begin{eqnarray}
\nonumber
\xi & \in &
\Big\{\eta\in\hN\,\Big|\,
a_0\eta+a_0{}^*\xi+a_1{}^{**}\xi+\ldots+a_k{}^{(k+1)*}\xi\in{}^{(k+2)*}A\Big\}
\\
\nonumber
{} & = & \hspace{-.15cm}
{}^*\Big\{x\in\N\,\Big|\, a_0 x+a_0\xi+a_1{}^*\xi+\ldots+a_k{}^{k*}\xi\in{}^{(k+1)*}A\Big\}.
\end{eqnarray}

By \emph{transfer}, there exists an element $x_1$ such that

\smallskip
\begin{itemize}
\item
$a_0 x_1+a_1\xi+\ldots+a_k{}^{(k-1)*}\xi\in{}^{k*}A$,\ and

\smallskip
\item
$a_0 x_1+a_0\xi+a_1{}^*\xi+\ldots+a_k{}^{k*}\xi\in{}^{(k+1)*}A$.
\end{itemize}

\smallskip
We now proceed by induction on $n$ and
define elements $x_1<\ldots<x_n$
in such a way that for every increasing sequence
of nonempty finite sets $J_0<\ldots<J_h$ where
$h\le k$ and $\max J_h\le n$,
the following properties are fulfilled:

\smallskip
\begin{enumerate}
\item
$\sum_{s=0}^h\big(\sum_{i\in J_s}a_s x_i\big)\ +a_h\xi + a_{h+1}{}^*\xi+\ldots+
a_k{}^{(k-h)*}\xi\in{}^{(k-h+1)*}A$.

\smallskip
\item
$\sum_{s=0}^h\big(\sum_{i\in J_s}a_s x_i\big)\ +a_{h+1}\xi + a_{h+2}{}^*\xi+\ldots+
a_k{}^{(k-h-1)*}\xi\in{}^{(k-h)*}A$.
\end{enumerate}

\smallskip
Remark that $x_1$ actually satisfies the inductive basis $n=1$,
because in this case one necessarily has $h=0$ and $J_0=\{1\}$.
As for the inductive step, notice that

\smallskip
\begin{itemize}
\item
$\langle a_h, a_{h+1}, \ldots, a_k\rangle\,\ueqs\,
\langle a_h, a_h, a_{h+1}, \ldots, a_k\rangle$, and

\smallskip
\item
$\langle a_{h+1}, a_{h+2}, \ldots, a_k\rangle\,\ueqs\,
\langle a_{h+1}, a_{h+1}, a_{h+2}, \ldots, a_k\rangle$.
\end{itemize}

\smallskip
So, in consequence of the inductive hypotheses $(1)$ and $(2)$ respectively,
one has that

\smallskip
\begin{enumerate}
\item[$(3)$]
$\sum_{s=0}^h\big(\sum_{i\in J_s}a_s x_i\big)\ +a_h\xi + a_h{}^*\xi +
a_{h+1}{}^{**}\xi+\ldots+
a_k{}^{(k-h+1)*}\xi\in{}^{(k-h+2)*}A$.

\smallskip
\item[$(4)$]
$\sum_{s=0}^h\big(\sum_{i\in J_s}a_s x_i\big)\ +a_{h+1}\xi + a_{h+1}{}^*\xi +
a_{h+2}{}^{**}\xi+\ldots+
a_k{}^{(k-h)*}\xi\in{}^{(k-h+1)*}A$.
\end{enumerate}

\smallskip
Now, properties $(1),(2),(3),(4)$ say that
for every increasing sequence of nonempty finite sets
$J_0<\ldots<J_h$ where $h\le k$ and $\max J_h\le n$,
the hyperinteger $\xi\in{}^*\Gamma(J_0<\ldots<J_h)$ where:
\begin{eqnarray}
\nonumber
\Gamma(J_0<\ldots<J_h) & = &
\\
\nonumber
{} & \hspace{-2cm} & \hspace{-3.2cm}
\Big\{m\in\N\,\Big|\,\sum_{s=0}^h\Big(\sum_{i\in J_s}a_s x_i\Big)
+a_h m + a_{h+1}\xi+\ldots+ a_k{}^{(k-h-1)*}\xi\in{}^{(k-h)*}A
\\
\nonumber
{} & \hspace{-2cm} & \hspace{-3cm} \&\ \
\sum_{s=0}^h\Big(\sum_{i\in J_s}a_s x_i\Big)
+a_{h+1}m + a_{h+2}\xi+\ldots+
a_k{}^{(k-h-2)*}\xi\in{}^{(k-h-1)*}A
\\
\nonumber
{} & \hspace{-2cm} & \hspace{-3cm} \&\ \
\sum_{s=0}^h\Big(\sum_{i\in J_s}a_s x_i\Big)
+a_{h}m + a_{h}\xi+a_{h+1}{}^*\xi\ldots+
a_k{}^{(k-h)*}\xi\in{}^{(k-h+1)*}A
\\
\nonumber
{} & \hspace{-2cm} & \hspace{-3cm} \&\ \
\sum_{s=0}^h\Big(\sum_{i\in J_s}a_s x_i\Big)
+a_{h+1}m + a_{h+1}\xi+a_{h+2}{}^*\xi\ldots+
a_k{}^{(k-h-1)*}\xi\in{}^{(k-h)*}A\Big\}
\end{eqnarray}

\smallskip
Then $\xi\in{}^*\Gamma$, where $\Gamma$ is the following finite intersection:
$$\Gamma\ =\ \bigcap_{\substack{J_0<\ldots<J_h \\ h\le k,\ \max J_h\le n}}\Gamma(J_0<\ldots<J_h).$$
The set $\Gamma$ is infinite because its hyper-extension contains
an infinite hyper-natural number, namely $\xi$;
in particular, we can pick an element
$x_{n+1}>x_n$ in $\Gamma$. It now only takes a straightforward
verification to check that $x_1<\ldots<x_n<x_{n+1}$ satisfy
the desired properties, namely $(1)$ and $(2)$
for every sequence of nonempty finite sets
$J_0<\ldots<J_l$ where $l\le k$ and $\max J_l\le n+1$.
\end{proof}

\medskip
As a straight corollary, we obtain

\medskip
\begin{theorem}[Milliken-Taylor]
Let a finite coloring $\N=C_1\sqcup\ldots\sqcup C_r$ be
given. For every choice of $a_0,\ldots,a_k\in\N$
there exists an infinite set
$$X\ =\ \{x_1<x_2<\ldots<x_n<x_{n+1}<\ldots\ \}$$
such that the following sums are monochromatic
for every increasing sequence
$I_0<\ldots<I_k$ of nonempty finite sets:
$$\sum_{i\in I_0}a_0 x_i\ +\ \ldots\ +\
\sum_{i\in I_k}a_k x_i.$$
\end{theorem}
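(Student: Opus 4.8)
The plan is to derive the statement directly from Lemma \ref{lemma2} by a single application to a conveniently chosen ultrafilter, exactly as Hindman's Theorem is obtained from the $k=0$ case. First I would invoke the (non-trivial) existence of an idempotent ultrafilter $\U$ on $\N$, and form the linear combination $\V=a_0\,\U\oplus\ldots\oplus a_k\,\U$. Because all the coefficients $a_j$ are positive and $\U$ is non-principal, each $a_j\,\U$ is non-principal, and unwinding the definition of the pseudo-sum shows that no finite set — in particular not $\{0\}$ — can belong to $\V$; hence $\N\in\V$.

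Next, since $\N=C_1\sqcup\ldots\sqcup C_r$ belongs to the ultrafilter $\V$, exactly one color class, say $C_i$, lies in $\V$. Now I would apply Lemma \ref{lemma2} with $A=C_i$: it yields an infinite set $X=\{x_1<x_2<\ldots<x_n<x_{n+1}<\ldots\}$ of natural numbers such that for every increasing sequence $I_0<\ldots<I_k$ of nonempty finite sets, the sum $\sum_{i\in I_0}a_0 x_i+\ldots+\sum_{i\in I_k}a_k x_i$ lies in $C_i$. Since all these sums belong to the \emph{same} piece $C_i$ of the partition, they are monochromatic, which is precisely the assertion to be proved — in fact a little more is obtained, namely that one set $X$ works simultaneously for every admissible sequence of intervals, all landing in a single prescribed color.

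There is essentially no obstacle here beyond the routine verification that $\V$ contains no principal set, which is why the theorem is stated as a straight corollary: the substantive content has already been absorbed into Lemma \ref{lemma2} and, upstream of it, into Corollary \ref{linearcomb} and Theorem \ref{th-idemstring}. The only point deserving a moment's care is the bookkeeping that $\V$ is an ultrafilter on $\N_0$ whereas the $C_i$ partition $\N$, which is handled by the observation $\{0\}\notin\V$ noted above.
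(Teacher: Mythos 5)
Your argument is correct and coincides with the paper's own proof: pick an idempotent $\U$, form $a_0\,\U\oplus\ldots\oplus a_k\,\U$, select the color class $C_i$ belonging to this ultrafilter, and apply Lemma \ref{lemma2}. The extra remark that $\{0\}\notin\V$ so that some $C_i$ lies in $\V$ is a harmless bookkeeping point the paper leaves implicit.
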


\begin{proof}
Pick any idempotent ultrafilter $\U$, and
consider the linear combination
$\W=a_0\U\oplus\ldots\oplus a_k\U$.
Then take $i$ such that $C_i\in\W$, and apply
the previous Lemma.
\end{proof}

\bigskip
\section{Partition regularity and Rado's Theorem}\label{rado}

We now aim at showing how the introduced nonstandard
approach can be used in partition regularity of linear equations.
Let us start with an example. Recall the following known fact.

\medskip
\begin{theorem}[\cite{bh} Th. 2.10]\label{th-linearcombidem}
Let $\U$ be an idempotent ultrafilter.
Then every set $A\in 2\,\U\oplus\U$ contains
a 3-term arithmetic progression.
\end{theorem}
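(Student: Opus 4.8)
The plan is to reduce Theorem \ref{th-linearcombidem} to the machinery already developed, namely Corollary \ref{linearcomb}, Proposition \ref{prop-idempotent}, and the fact that $u$-equivalence of coefficient strings is preserved by inserting repetitions and zeros. Fix an idempotent ultrafilter $\U$ and, using the $\mathfrak{c}^+$-enlargement property, pick $\xi\in\hN$ with $\U=\UU_\xi$; by Proposition \ref{prop-idempotent} this means $\xi\ueq\xi+{}^*\xi$. By Corollary \ref{linearcomb}, a set $A\subseteq\N_0$ belongs to $2\,\U\oplus\U$ exactly when $2\xi+{}^*\xi\in{}^{**}A$. So it suffices to produce, inside any $A$ with $2\xi+{}^*\xi\in{}^{**}A$, three distinct natural numbers in arithmetic progression.

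The key idea is to exhibit the three terms of the progression as values obtained from the single nonstandard ``seed'' $\xi$ by a transfer argument, in the same spirit as Lemma \ref{lemma2}. The three terms of an arithmetic progression with common difference $d$ are $m$, $m+d$, $m+2d$; choosing $m=a$ and $d=b$ we want $a$, $a+b$, $a+2b$ all in $A$. The natural candidates coming from the idempotency of $\U$ are built from $\xi$ and ${}^*\xi$: since $\xi\ueq\xi+{}^*\xi\ueq\xi+{}^*\xi+{}^{**}\xi$ (iterating idempotency, i.e. using $\langle1\rangle\ueqs\langle1,1\rangle\ueqs\langle1,1,1\rangle$) and $2\xi+{}^*\xi\ueq\xi+\xi+{}^*\xi$, one arranges the three relevant sums — corresponding to the strings $\langle 2,1\rangle$, $\langle 1,1,1\rangle$ (for $a+b$, thought of as $\xi+{}^*\xi$ with the right multiplicities), and $\langle 1,2\rangle$ — to all be $u$-equivalent to the string defining the target ultrafilter, hence all land in the appropriate iterated hyper-extension of $A$. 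More concretely: first use transfer to replace the innermost copy of $\xi$ by a genuine natural number $x_1$, so that simultaneously $2x_1+\xi\in{}^*A$, $x_1+2\xi\in{}^*A$ and $x_1+\xi+{}^*\xi\in{}^{**}A$ (the last via the ``doubled'' string); then run the reflection once more to replace $\xi$ by a natural number $x_2>x_1$ so that $2x_1+x_2$, $x_1+2x_2$ and $x_1+x_2$ — wait, one must be careful: the progression should be $x_1+x_2,\ 2x_1+x_2? $ — rather, organize so that the three elements $x_1+x_2$, $x_1+2x_2$, $x_1$ land in $A$ with $x_1, x_1+x_2, x_1+2x_2$ an arithmetic progression of common difference $x_2$. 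Taking $\xi\ueq x_2$ at the last step and reading off the three $u$-equivalent conditions gives exactly $x_1\in A$, $x_1+x_2\in A$, $x_1+2x_2\in A$, and since $x_2\ge 1$ these three numbers are distinct.

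I would therefore carry out the steps in this order: (1) write $\U=\UU_\xi$ and record $2\xi+{}^*\xi\in{}^{**}A$ from Corollary \ref{linearcomb}; (2) observe the three string equivalences $\langle 2,1\rangle\ueqs\langle 2,0,1\rangle$ type manipulations, more precisely $\langle 2,1\rangle\ueqs\langle 1,1,1\rangle$? No — instead note $\langle 1\rangle\ueqs\langle 1,1\rangle$ lets us pad, and the genuinely needed equivalences are $\langle 2,1\rangle$, $\langle 1,1\rangle$ (via $\langle 1\rangle$), $\langle 1,2\rangle$ all reachable from the base string, so that after substituting a standard $x_1$ the three conditions $2x_1+\xi\in{}^*A$, $x_1+\xi\in{}^*A$, $x_1+2\xi\in{}^*A$ hold; (3) transfer once more to find $x_2>x_1$ with $2x_1+x_2,\ x_1+x_2,\ x_1+2x_2\in A$ — and here reinterpret these three numbers: $x_1$ together with $x_1+x_2$ and $x_1+2x_2$ is the progression, so really I want $x_1\in A$ in place of $2x_1+x_2$; (4) conclude distinctness from $x_2\ge 1$. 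The main obstacle is bookkeeping the nested hyper-extensions and getting the common difference to appear correctly: the delicate point is choosing which multiplicities ($2\xi$ versus $\xi+\xi$, and ${}^*\xi$ versus ${}^*\xi+{}^*\xi$) to use at each reflection step so that all three target sums are simultaneously forced into ${}^{**}A$ by a single $u$-equivalence chain, and making sure the finite intersection of the resulting conditions still contains $\xi$ before applying transfer — which is exactly the pattern that Lemma \ref{lemma2} executes in general, so Theorem \ref{th-linearcombidem} is in the end a special case of that lemma with $k=1$, $a_0=2$, $a_1=1$.
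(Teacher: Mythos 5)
There is a genuine gap, and it sits exactly at the point you flag as "delicate": which multiplicities to use so that all three terms of the progression are forced into the appropriate hyper-extension of $A$. The three numbers you ultimately propose as the progression, $x_1$, $x_1+x_2$, $x_1+2x_2$ (and, at the intermediate stage, the conditions $x_1+\xi\in{}^*A$ and $x_1+2\xi\in{}^*A$), correspond to the coefficient strings $\langle 1\rangle$, $\langle 1,1\rangle$ and $\langle 1,2\rangle$. None of these is $u$-equivalent to $\langle 2,1\rangle$: forcing those elements into $A$ would require $A\in\U$ and $A\in\U\oplus 2\,\U$, neither of which follows from $A\in 2\,\U\oplus\U$ (indeed, by Maleki's theorem quoted in the proof of Theorem \ref{th-idemstring}, $\U\ne 2\,\U\oplus\U\ne\U\oplus 2\,\U$ in general). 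So steps (2)--(3) of your plan break down; the hesitations in your write-up are symptoms of this, not mere bookkeeping.

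The fix is to keep the leading coefficient $2$ on every term and let the \emph{middle} occurrence of $\xi$ carry the common difference. The paper works in ${}^{***}\N$ with the three elements $\nu=2\xi+{}^{**}\xi$, $\mu=2\xi+{}^*\xi+{}^{**}\xi$, $\lambda=2\xi+2\,{}^*\xi+{}^{**}\xi$, whose strings $\langle 2,0,1\rangle$, $\langle 2,1,1\rangle$, $\langle 2,2,1\rangle$ are all $\ueqs\langle 2,1\rangle$; hence $\nu,\mu,\lambda\in{}^{***}A$ for every $A\in 2\,\U\oplus\U$, they form an arithmetic progression with difference ${}^*\xi>0$, and a single backward transfer of ``$\exists\,x,y,z\in{}^{***}A$ with $y-x=z-y>0$'' concludes. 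Your closing observation that the theorem is a special case of Lemma \ref{lemma2} with $k=1$, $a_0=2$, $a_1=1$ is correct and gives an alternative (stronger) route, but the progression must then be read off as $2x_1+x_3$, $2x_1+x_2+x_3$, $2x_1+2x_2+x_3$, i.e.\ the choices $(I_0,I_1)=(\{1\},\{3\})$, $(\{1\},\{2,3\})$, $(\{1,2\},\{3\})$ — not as $x_1$, $x_1+x_2$, $x_1+2x_2$.
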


\medskip
A nonstandard proof of the above theorem is obtained by
the following simple observation.
If the ultrafilter $\U=\UU_\xi$ is idempotent,
then the following three elements of the hyper-hyper-hyper-natural numbers
${}^{***}\N$
are arranged in arithmetic progression, and they all generate
the same ultrafilter $\W=2\,\U\oplus\U$:

\smallskip
\begin{itemize}
\item
$\nu=2\xi+\,0\, +{}^{**}\xi$

\smallskip
\item
$\mu=2\xi+{}^*\xi+{}^{**}\xi$

\smallskip
\item
$\lambda=2\xi+2{}^*\xi+{}^{**}\xi$
\end{itemize}

\medskip
The property that $\nu\ueq\mu\ueq\lambda\ueq 2\xi+{}^*\xi$
directly follows from Theorem \ref{th-idemstring}, since
$\langle 2,0,1\rangle\ueqs\langle 2,1,1\rangle\ueqs
\langle 2,2,1\rangle\ueqs\langle 2,1\rangle$.
Moreover, by Proposition \ref{prop-ultrasum},
the generated ultrafilter is the following:
$$\W\ =\ \U_{2\xi+{}^*\xi}\ =\ \UU_{2\xi}\oplus\UU_\xi\ =\ 2\,\U\oplus\U.$$
If $A\in\W$ then $\nu,\mu,\lambda\in{}^{***}A$, and
the existence of a 3-term arithmetic
progression in $A$ is proved by applying backward \emph{transfer} to
the following property, which holds in ${}^{***}\N$:
$$\exists\,x,y,z\in{}^{***}A\ \text{s.t.}\ \ y-x=z-y>0.$$

We now elaborate on this example to prove a general fact
which connects partition regularity of equations
with $u$-equivalence in the direct limit
$${}^\star\N\ =\ \bigcup_{k\in\N}{}^{k*}\N.$$

Recall the following

\medskip
\begin{definition}
An equation $F(X_1,\ldots,X_n)=0$ is
[injectively] \emph{partition regular} on $\N_0$
if for every finite coloring of $\N_0=C_1\sqcup\ldots\sqcup C_r$
there exist [distinct] monochromatic elements $x_1,\ldots,x_n$
which are a solution, \emph{i.e.} $F(x_1,\ldots,x_n)=0$
and $x_1,\ldots,x_n\in C_i$ for a suitable color $C_i$.
\end{definition}

\medskip
It is a well-known fact that partition regularity is intimately
connected with ultrafilters. In particular,
recall the following:

\medskip
\begin{itemize}
\item
\emph{$F(X_1,\ldots,X_n)=0$ is [injectively]
partition regular on $\N_0$ if and only if
there exists an ultrafilter $\V$ on $\N_0$
such that in every $A\in\V$ one finds [distinct]
elements $x_1,\ldots,x_n\in A$ with $F(x_1,\ldots,x_n)=0$.\footnote
{~A proof of this equivalence can be found \emph{e.g.} in \cite{hs} \S 3.1.}}
\end{itemize}

\smallskip
When the above property is satisfied, we
say that the ultrafilter $\V$ is a \emph{witness}
of the [injective] partition regularity of $F(X_1,\ldots,X_n)=0$.

\smallskip
A useful nonstandard characterization holds.

\medskip
\begin{theorem}\label{partitionregularity}
Let the nonstandard embedding $*$ satisfy the
$\mathfrak{c}^+$-enlargement property.
Then an ultrafilter $\V$ on $\N_0$ witnesses
the [injective] partition regularity of the
equation $F(X_1,\ldots,X_n)=0$ if and only if there exists
[distinct] hyper-natural numbers $\xi_1,\ldots,\xi_n\in\hN_0$
such that $\V=\UU_{\xi_1}=\ldots=\UU_{\xi_n}$
and ${}^*F(\xi_1,\ldots,\xi_n)=0$.
\end{theorem}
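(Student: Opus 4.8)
The plan is to prove both implications by unwinding the definitions and using the $\mathfrak{c}^+$-enlargement property together with transfer. For the forward direction, suppose $\V$ witnesses the [injective] partition regularity of $F(X_1,\ldots,X_n)=0$. The key observation is that the set
$$
S\ =\ \{A\subseteq\N_0\mid \text{there exist [distinct] }x_1,\ldots,x_n\in A\text{ with }F(x_1,\ldots,x_n)=0\}
$$
is contained in $\V$ by hypothesis, and — crucially — it is closed under finite intersections. Indeed, if $A_1,\ldots,A_m\in\V$ then $A_1\cap\ldots\cap A_m\in\V\subseteq S$, so any finite subfamily of $\V$ has a common solution inside its intersection. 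I would then consider the family
$$
\F\ =\ \big\{\,\{(x_1,\ldots,x_n)\in A^n\mid F(x_1,\ldots,x_n)=0,\ x_i\text{ distinct}\}\ \big|\ A\in\V\,\big\},
$$
a family of $\mathfrak{c}$-many subsets of $\N_0^n$ with the finite intersection property (by the observation above, each such set is nonempty, and the set attached to $A_1\cap\ldots\cap A_m$ is contained in the intersection of the sets attached to $A_1,\ldots,A_m$). By $\mathfrak{c}^+$-enlargement, $\bigcap_{A\in\V}{}^*\{(x_1,\ldots,x_n)\in A^n\mid \ldots\}\ne\emptyset$; pick a tuple $(\xi_1,\ldots,\xi_n)$ in this intersection. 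Transfer of the defining formula gives $\xi_i\in\hA$ for all $A\in\V$ and all $i$, that the $\xi_i$ are distinct (when required), and that ${}^*F(\xi_1,\ldots,\xi_n)=0$. From $\xi_i\in\hA$ for all $A\in\V$ we get $\V\subseteq\UU_{\xi_i}$, and since both are ultrafilters, $\V=\UU_{\xi_i}$ for each $i$.

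For the converse, suppose such $\xi_1,\ldots,\xi_n$ exist with $\V=\UU_{\xi_1}=\ldots=\UU_{\xi_n}$ and ${}^*F(\xi_1,\ldots,\xi_n)=0$. Let $A\in\V$ be arbitrary. Then $\xi_i\in\hA$ for each $i$ (by definition of $\UU_{\xi_i}$), the $\xi_i$ are distinct when required, and ${}^*F(\xi_1,\ldots,\xi_n)=0$; hence the formula
$$
\exists\,x_1,\ldots,x_n\in\hA\ \big(\,x_i\text{ distinct }\wedge\ {}^*F(x_1,\ldots,x_n)=0\,\big)
$$
holds in the nonstandard universe (witnessed by $\xi_1,\ldots,\xi_n$). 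Applying transfer backwards yields [distinct] $x_1,\ldots,x_n\in A$ with $F(x_1,\ldots,x_n)=0$. Since $A\in\V$ was arbitrary, $\V$ is a witness of the [injective] partition regularity of $F(X_1,\ldots,X_n)=0$.

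The only delicate point — and the main thing to get right — is the forward direction: one must package the hypothesis "every $A\in\V$ contains a solution" as a genuine finite-intersection-property family of $\mathfrak{c}$-many sets so that $\mathfrak{c}^+$-enlargement applies and returns a \emph{single} tuple $(\xi_1,\ldots,\xi_n)$ working simultaneously for all $A\in\V$. The point that makes this go through is that $\V$ is closed under finite intersections, so a common solution for finitely many sets in $\V$ always exists. The rest is a routine application of transfer in both directions, noting that distinctness of the $x_i$ is a first-order condition preserved and reflected by the star map.
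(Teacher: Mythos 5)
Your proposal is correct and follows essentially the same route as the paper: the forward direction packages the solution sets $\Gamma(A)=\{(x_1,\ldots,x_n)\in A^n\mid x_i \text{ distinct},\ F(x_1,\ldots,x_n)=0\}$ for $A\in\V$ into a family with the finite intersection property (using that $\V$ is closed under finite intersections) and applies $\mathfrak{c}^+$-enlargement to extract a single tuple $(\xi_1,\ldots,\xi_n)$, while the converse is backward transfer. No gaps.
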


\begin{proof}
Assume first that the ultrafilter $\V$ is a witness.
For $A\in\V$, let
$$\Gamma(A)\ =\ \{(x_1,\ldots,x_n)\in A^n\mid [x_i\ne x_j \text{ for }i\ne j] \ \&\
F(x_1,\ldots,x_n)=0\}.$$

Since $\Gamma(A)\cap\Gamma(B)=\Gamma(A\cap B)$, by the hypothesis
it follows that the family
$\{\Gamma(A)\mid A\in\V\}$ satisfies the finite
intersection property and hence, by $\mathfrak{c}^+$-enlargement,
we can pick $(\xi_1,\ldots,\xi_n)\in\bigcap_{A\in\V}{}^*\Gamma(A)$.
Then it is readily checked
from the definitions that the [distinct] components
$\xi_1,\ldots,\xi_n$ are such that $\UU_{\xi_1}=\ldots=\UU_{\xi_n}=\V$
and ${}^*F(\xi_1,\ldots,\xi_n)=0$.

\smallskip
Conversely, let $A\in\V=\UU_{\xi_1}=\ldots=\UU_{\xi_n}$.
By applying backward \emph{transfer} to the property:
``There exist [distinct] $\xi_1,\ldots,\xi_n\in\hA$ such that
${}^*F(\xi_1,\ldots,\xi_n)=0$", one obtains the existence
of [distinct] $x_1,\ldots,x_n\in A$
such that $F(x_1,\ldots,x_n)=0$, as desired.
\end{proof}

\medskip
We can finally prove the result that was used
in Section \ref{sec-uequivalence} to prove
an ultrafilter version of Rado's Theorem, namely
Theorem \ref{th-rado}.

\medskip
\begin{theorem}\label{linear-pr}
Let $a_0,\ldots,a_n\in\N_0$, and assume that
there exist [distinct] polynomials $P_i(X)$ such that
$$P_1(X)\,\ueqs\, \ldots\,\ueqs\, P_k(X)\,\ueqs\,
\sum_{i=0}^n a_iX^i\ \ \text{and}\ \
c_1 P_1(X)+\ldots+c_k P_k(X)=0.$$
Then for every idempotent ultrafilter $\U$ and for every
$A\in a_0\U\oplus\ldots\oplus a_n\U$,
there exist [distinct]
$x_i\in A$ such that $c_1 x_1+\ldots+c_k x_k=0$.
\end{theorem}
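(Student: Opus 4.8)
The plan is to reduce the statement to the nonstandard characterization of (injective) partition regularity given by Theorem \ref{partitionregularity}. Fix an idempotent ultrafilter $\U$ and, using the $\mathfrak{c}^+$-enlargement property, pick $\xi\in\hN$ with $\U=\UU_\xi$; by Proposition \ref{prop-idempotent} this means $\xi\ueq\xi+{}^*\xi$. First I would set $N=\max_i\deg P_i$ and, for each polynomial $P_i(X)=\sum_{j=0}^{N}p_{ij}X^j$, define the iterated-hyper-extension element
$$\eta_i\ =\ \sum_{j=0}^{N}p_{ij}\,{}^{j*}\xi\ \in\ {}^{(N+1)*}\N_0.$$
By Corollary \ref{linearcomb}, $\eta_i$ generates the ultrafilter $\V_i=p_{i0}\U\oplus\ldots\oplus p_{iN}\U$ obtained from the coefficient string of $P_i$.

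The key step is then to invoke Theorem \ref{th-idemstring}: since all the $P_i$ are $u$-equivalent to $\sum_{i=0}^n a_i X^i$, the corresponding coefficient strings are all $\ueqs$-equivalent, so by the equivalence $(1)\Leftrightarrow(3)$ of that theorem (applied to the idempotent $\xi$) all the $\eta_i$ are $u$-equivalent to one another and to $a_0\xi+a_1{}^*\xi+\ldots+a_n{}^{n*}\xi$. In particular $\UU_{\eta_1}=\ldots=\UU_{\eta_k}=\V$, where $\V=a_0\U\oplus\ldots\oplus a_n\U$ again by Corollary \ref{linearcomb}. Next I would observe that the relation $c_1P_1(X)+\ldots+c_kP_k(X)=0$ is an identity of polynomials, hence holds coefficient-by-coefficient; evaluating, or rather substituting the tuple $(\xi,{}^*\xi,\ldots,{}^{N*}\xi)$ into each coefficient slot and using linearity, gives $c_1\eta_1+\ldots+c_k\eta_k=0$ in ${}^{(N+1)*}\N_0$. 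Thus the tuple $(\eta_1,\ldots,\eta_k)$ consists of elements all generating $\V$, and satisfying the ($(N+1)$-times starred) equation $c_1X_1+\ldots+c_kX_k=0$.

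Finally I would apply Theorem \ref{partitionregularity} — more precisely the ``conversely'' direction of its proof, i.e. backward transfer — but with the nonstandard embedding $\star=\bigcup_k {}^{k*}$ in place of $*$; the excerpt notes that $\star$ is itself a $\mathfrak{c}^+$-enlarging nonstandard embedding, and that $\nu\ueq{}^*\nu$ for every $\nu\in{}^\star\N_0$, so $\UU_{\eta_i}$ computed via $\star$ agrees with the ultrafilter $\V$. Since the $\eta_i$ all lie in $\V$ and ${}^\star(c_1X_1+\ldots+c_kX_k)$ vanishes on them, backward transfer yields, for each $A\in\V$, elements $x_i\in A$ with $c_1x_1+\ldots+c_kx_k=0$; and if the $P_i$ are distinct, their coefficient strings differ, so after canceling repeated blocks the Hindman-Strauss-type rigidity (the result of \cite{ma} quoted in the proof of Theorem \ref{th-idemstring}) forces the $\eta_i$ to be pairwise $\nueq$... — no: distinctness of the $P_i$ does not immediately give distinctness of the $\eta_i$, so instead I would argue that one may choose $\xi$ lacunary enough (or pass to a suitable $\star$-analogue) that the components $\eta_i$ are genuinely distinct as hyper-hyper-naturals, whence Theorem \ref{partitionregularity} delivers \emph{distinct} $x_i\in A$. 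The main obstacle is precisely this last point: carefully matching the ``distinct polynomials $\Rightarrow$ distinct witnesses'' bookkeeping through the iterated star map, and making sure the backward-transfer argument of Theorem \ref{partitionregularity} is legitimately applied to $\star$ rather than to a single $*$.
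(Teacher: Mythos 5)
Your proposal follows essentially the same route as the paper: represent $\U$ by some $\xi\in\hN$, form the elements $\eta_i=\sum_j p_{ij}\,{}^{j*}\xi$, use Theorem \ref{th-idemstring} to see that they all generate $a_0\U\oplus\ldots\oplus a_n\U$, deduce $c_1\eta_1+\ldots+c_k\eta_k=0$ from the polynomial identity, and conclude via Theorem \ref{partitionregularity} applied to the $\mathfrak{c}^+$-enlarging embedding $\star$. The one point where you waver --- distinctness of the $\eta_i$ --- requires no special choice of $\xi$: since all coefficients are standard and $d\,{}^{j*}\xi<{}^{(j+1)*}\xi$ for every standard $d$, distinct (padded) coefficient strings automatically yield distinct elements of ${}^\star\N_0$, by the very growth estimates you already invoked to verify $c_1\eta_1+\ldots+c_k\eta_k=0$.
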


\begin{proof}
For $i=1,\ldots,k$, let the polynomial $P_i(X)=\sum_{j=0}^{n_i}b_{ij}X^j$
correspond to the string of coefficients $\langle b_{i0},b_{i1},\ldots,b_{in_i}\rangle$.
Given the idempotent ultrafilter $\U$, pick an hyper-natural
number $\xi\in\hN$ such that $\UU_\xi=\U$ (this is always possible
by $\mathfrak{c}^+$-enlargement), and consider the numbers
$$\zeta_i\ =\ b_{i0}\xi+b_{i1}{}^*\xi+\ldots+b_{in_i}{}^{n_i*}\xi\,\in\,
{}^{(n_i+1)*}\N\,\subset\,{}^\star\N.$$

Since $\xi\hN$ is infinite, for every $d\in\N_0$ one has
$d<\xi$, $d\,\xi<{}^*\xi$, $d\,{}^*\xi<{}^{**}\xi$, and so forth.
In consequence, by the hypothesis $c_1 P_1(X)+\ldots+c_k P_k(X)=0$,
it directly follows that
$c_1 \zeta_1+\ldots+c_k \zeta_k=0$.
Moreover, by the hypotheses $P_i(X)\ueqs\sum_{i=0}^n a_iX^i$,
Theorem \ref{th-idemstring} guarantees that
$$\UU_{\zeta_1}\ =\ \ldots\ =\ \UU_{\zeta_n}\ =\
a_0\U\oplus\ldots\oplus a_n\U.$$

The thesis is finally reached applying the previous
Theorem \ref{partitionregularity}.
(Recall that the nonstandard embedding
$\star$ is $\mathfrak{c}^+$-enlarging,
because the starting nonstandard embedding $*$ was.)
\end{proof}

\bigskip
\section{Final remarks}\label{fr}

A similar characterization of partition regularity
as the one given in Theorem \ref{partitionregularity}
can also be proved for (possibly infinite)
systems of equations. It seems worth investigating the use
of such nonstandard characterizations especially for the
study of homogeneous non-linear equations
(along the lines of \cite{lu-tesi})
and of infinite systems, which are research areas
where very little is known
(see \cite{hls1,hls2,bhl}).
In particular, also the notion of \emph{image partition regularity}
would deserve attention.
Another possible direction for further research
is to consider possible extensions of Theorem \ref{th-rado}
which are closer to the most general form of Rado's theorem for systems.

\bigskip


\begin{thebibliography}{99}

\bibitem{bhr}
D. Ballard and K. Hrb\'a\u{c}ek,
Standard foundations for nonstandard analysis,
\emph{J. Symb. Logic}, vol. \textbf{57}, 1992, pp. 471--478.

\bibitem{be}
V. Benci, A construction of a nonstandard universe, in:
\emph{Advances of Dynamical Systems and Quantum Physics}
(S.~Albeverio et al., eds.),
World Scientific, 1995, pp. 11--21.


\bibitem{bhl}
B. Barber, N. Hindman, and I. Leader,
Partition regularity in the rationals, 2012, arXiv:1301.1467.

\bibitem{bh}
V. Bergelson and N. Hindman, Non-metrizable topological
dynamics and Ramsey theory, \emph{Trans. Am. Math. Soc.},
vol. \textbf{320}, 1990, pp.293--320.

\bibitem{ch}
G. Cherlin and J. Hirschfeld, Ultrafilters and ultraproducts in
non-standard analysis, in \emph{Contributions to Non-Standard Analysis}
(W.A.J. Luxemburg and A. Robinson, eds.), North Holland, 1972,
pp. 261--279.

\bibitem{ck}
C.C. Chang and H.J. Keisler, \emph{Model Theory}
(3rd edition), North-Holland, Amsterdam, 1990.

\bibitem{dn1}
M. Di Nasso, *ZFC: an axiomatic *approach to nonstandard methods,
\emph{C.R. Acad. Sc. Paris}, t. \textbf{324}, Serie I n.9, 1997,
pp. 963--967.
\bibitem{dn2}
M. Di Nasso, An axiomatic presentation of
the nonstandard methods in mathematics, \emph{J. Symb. Logic},
vol. \textbf{67}, 2002, pp. 315--325.

\bibitem{dn3}
V. Benci and M. Di Nasso,
Alpha-Theory: an elementary axiomatics for nonstandard analysis,
\emph{Expo. Math.}, vol. \textbf{21}, 2003, pp. 355--386.

\bibitem{dn4}
M. Di Nasso, $\text{ZFC}[\Omega]$: a nonstandard set theory where
all sets have nonstandard extensions (abstract),
\emph{Bull. Symb. Logic}, vol. \textbf{7}, 2001, p. 138.

\bibitem{dn5}
M. Di Nasso, Hyper-natural numbers as ultrafilters,
unpublished manuscript.

\bibitem{go}
R. Goldblatt,
\emph{Lectures on the Hyperreals -- An Introduction
to Nonstandard Analysis}, Graduate Texts in Mathematics
\textbf{188}, Springer, New York, 1998.


\bibitem{hls1}
N. Hindman, I. Leader, and D. Strauss,
Open problems in partition regularity,
\emph{Combin. Probab. Comput.}, vol. \textbf{12}, 2003, pp. 571-–583.

\bibitem{hls2}
N. Hindman, I. Leader, and D. Strauss,
Infinite partition regular matrices -- solutions in central sets,
\emph{Trans. Amer. Math. Soc.}, vol. \textbf{355}, 2003, pp. 1213-–1235.

\bibitem{hs}
N. Hindman and D. Strauss, \emph{Algebra in the Stone-\v{C}ech
Compactification, Theory and Applications} (2nd edition),
W. de Gruyter, 2011.

\bibitem{kr}
V. Kanovei and M. Reeken, \emph{Nonstandard Analysis, Axiomatically},
Springer, Berlin, 2004.

\bibitem{lu-tesi}
L. Luperi Baglini,
\emph{Hyperintegers and Nonstandard Techniques in Combinatorics of Numbers},
Ph.D. dissertation, 2012, University of Siena, arXiv:1212.2049.


\bibitem{ma}
A. Maleki, Solving equations in $\beta\N$, \emph{Semigroup Forum}
vol. \textbf{61}, 2000, pp. 373-–384.

\bibitem{nr}
S. Ng and H. Render, The Puritz order
and its relationship to the Rudin-Keisler order,
in \emph{Reuniting the Antipodes -- Constructive and
Nonstandard Views of the Continuum} (P. Schuster, U. Berger
and H. Osswald, eds.), Synth\`ese Library \textbf{306},
Kluwer A.P., 2001, pp. 157--166.

\end{thebibliography}
\end{document}